\newcommand{\be}{\begin{enumerate}}
\newcommand{\ee}{\end{enumerate}}
\newcommand{\beq}{\begin{equation}}
\newcommand{\eeq}{\end{equation}}
\begin{document}

\markboth{O. Kharlampovich, A. Myasnikov} {Definable sets in a  hyperbolic group}

%
\catchline{}{}{}{}{}
%

\title{Definable sets in a  hyperbolic group}

\author{Olga Kharlampovich}
\address{Department of Mathematics and Statistics, Hunter College, City University NY \\ New York 110065, USA\\ \email{okharlampovich@gmail.com}}

\author{Alexei Myasnikov}
\address{Department of Mathematics, Stevens Institute of Technology \\ Hoboken, NJ 07030, USA\\ \email{amiasnikov@gmail.com}}

\maketitle

\begin{history}
\received{(Day Month Year)} \revised{(Day Month Year)} \comby{[editor]}
\end{history}

\begin{abstract}
We give a description of definable sets $P=(p_1,\ldots ,p_m)$ in a free non-abelian group $F$ and in a torsion-free non-elementary hyperbolic group $G$. As a corollary we show that proper non-cyclic subgroups of $F$ and $G$ are not definable.
This answers Malcev's question posed in 1965 for $F$.
\end{abstract}

\section{Introduction}
We denote by $F$  a free group with finite basis and by $G$  a torsion-free non-elementary hyperbolic group
 and consider formulas in the  language $L_A$ that contains generators of $F$  (or $G$) as constants. In this paper we give a description of subsets of $F^m$  definable in  $F$ (Theorem \ref{general}) that follows from \cite{KMel} and similar description for $G$ (Theorem \ref{generalG}) that uses \cite{Sela7}.
A subset $S\in H^n$ is definable in a group H if there exists a
first-order formula $\phi(P)$ in $L_H$ such that $S$ is precisely the set of
tuples in $H^n$ where $\phi(P)$ holds:
$S = \{g\in H^n|H\vDash\phi (g)\}$.

 Our description implies that definable subsets in $F$  are either negligible or co-negligible (Bestvina and Feighn's result) and they are also either negligible or generic in the meaning of the complexity theory.  We will obtain the following corollary.

\begin{theorem}\label{co1} Proper non-cyclic subgroups of $F$ and $G$ are not definable.
\end{theorem}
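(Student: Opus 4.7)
The plan is to derive Theorem \ref{co1} directly from the classification of definable sets announced in Theorems \ref{general} and \ref{generalG}, together with the two dichotomies quoted in the introduction: every definable subset of $F$ is either negligible or co-negligible, and likewise either negligible or generic in the complexity-theoretic sense. The arguments for $F$ and for $G$ are parallel, with \cite{KMel} supplying the structural content in the free case and \cite{Sela7} in the hyperbolic case, so I describe the argument for $F$ and note the transfer to $G$ at the end.

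First I observe that the hypotheses of the corollary are sharp: the trivial and improper subgroups are definable tautologically, and for any $g\ne 1$ the centralizer $\{x:[x,g]=1\}$ coincides with $\langle g\rangle$ because centralizers of nontrivial elements in $F$ (and in the torsion-free non-elementary hyperbolic $G$) are cyclic. So the content of the statement is that no other subgroup is definable. Assume for contradiction that $H\le F$ is proper, non-cyclic, and definable. A short density argument rules out the ``large'' side of each dichotomy: if $H$ were co-negligible, then any coset $gH$ with $g\notin H$ would lie inside $F\setminus H$ and have the same density as $H$, forcing $H$ to have density zero and thus contradicting co-negligibility; the generic version of the argument is identical. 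Hence $H$ must be negligible in both senses, which already excludes the case of finite index.

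The remaining and principal task is to exclude infinite-index non-cyclic candidates using the structural part of Theorem \ref{general}. I plan to extract from the classification the statement that a negligible definable subset of $F$ is, up to a finite symmetric difference, contained in a finite union of translates of proper centralizers, i.e.\ cosets of cyclic subgroups. Granting this, since $H$ is non-cyclic it contains two non-commuting elements and hence a subgroup free of rank $2$, which has exponential growth of rank $2$ and cannot be covered by finitely many cosets of cyclic subgroups (whose growth is linear). This contradiction completes the argument. The transfer to $G$ is the same, replacing \cite{KMel} by \cite{Sela7} and using that every non-cyclic subgroup of a torsion-free non-elementary hyperbolic group contains a free subgroup of rank $2$. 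The main obstacle is precisely the extraction of the ``contained in finitely many cyclic cosets'' consequence from the classification; this is where the bulk of \cite{KMel} and \cite{Sela7} enters.
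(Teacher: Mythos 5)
The proposal's overall strategy---show the definable subgroup $H$ is negligible, rule out co-negligibility, and derive a contradiction from negligibility---matches the paper's. Your density argument for ruling out co-negligibility is a reasonable variant of the paper's (which instead uses Proposition~\ref{BF3}(3) directly: $F\setminus H$ contains the coset $gH$ of the non-cyclic subgroup $H$, and any set containing a coset of a non-cyclic subgroup is non-negligible).

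However, the step you flag as ``the main obstacle'' is not merely hard to extract from the classification: the lemma you propose is false. The set of all squares $\{w^2 : w\in F\}$ is Diophantine, admits a parametrization $p\circeq y\cdot y$ (hence is a multipattern and negligible with $\epsilon=1/2$ in Definition~\ref{1}), but it is not contained in finitely many cosets of cyclic subgroups, not even up to finite symmetric difference. More generally, negligibility in the Bestvina--Feighn sense only forces elements to repeat a relatively long piece; it does not confine the set to a slow-growth family like cyclic cosets. So the growth-rate contradiction you aim for never materializes, and the proof cannot be completed along the line you sketch.

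The paper's actual mechanism for ruling out negligibility of $H$ is much more elementary and does not pass through any ``covered by cyclic cosets'' claim. Since $H$ is non-cyclic, pick $x,y\in H$ with $[x,y]\neq 1$. The explicit infinite subset $\{xyxy^2x\cdots xy^ix : i\in\mathbb N\}\subseteq H$ has no piece occupying a fixed positive proportion of its length (the increasing exponents destroy any such repetition), so this subset is not negligible; since subsets of negligible sets are negligible (Proposition~\ref{BF3}(1)), $H$ itself is not negligible. Combined with your co-negligibility argument (or the paper's), this gives the contradiction. For $G$ the paper runs a parallel argument, replacing $x,y$ by suitable powers $a^n, b^m$ to get quasigeodesic control and then analyzing long overlapping pieces directly in the hyperbolic Cayley graph; the ``cyclic cosets'' route would fail there for the same reason.
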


These results solve Malcev's problem 1.19 from \cite{Kour} posed in 1965. Malcev asked the following:

1) Describe definable sets in $F$;

2) Describe definable subgroups in $F$;

3) Is the derived subgroup $[F,F]$ of F definable in $F$?

The main result, Theorem \ref{general}, will be proved in Section 4, Theorem \ref{co1} for $F$  will be proved in Section \ref{NS}.
In Section \ref{NS} we will also prove
\begin{theorem}\label{co2} The set of primitive elements of $F$ is not definable if $rank (F)>2.$
\end{theorem}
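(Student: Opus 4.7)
The plan is to apply Theorem~\ref{general} together with the negligible/co-negligible dichotomy for definable subsets of $F$ noted in the introduction. Suppose for contradiction that the set $P$ of primitive elements of $F = F_n$ with $n\geq 3$ is definable; then $P$ is either negligible or co-negligible, and we rule out both options.

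First, $P$ is not co-negligible. Its complement $F\setminus P$ contains the entire derived subgroup $[F,F]$ (every element of $[F,F]$ has trivial abelianization, so cannot be primitive), together with all proper powers $\{x^k : k \geq 2\}$ for $x\in F$. Such a complement is far too large to be negligible in the sense of Theorem~\ref{general}: $[F,F]$ already contains free subgroups of arbitrarily large rank and cannot be covered by any finite union of proper algebraic subsets of $F$.

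Second, $P$ is not negligible, and this is where the hypothesis $n\geq 3$ is essential. The set $P$ is the orbit $\mathrm{Aut}(F_n)\cdot a_1$, and for $n\geq 3$ this orbit is rich enough to escape the negligible description prescribed by the structure theorem. Given any putative finite Boolean combination of proper algebraic sets claimed to cover $P$, one constructs, via a sequence of Nielsen/Whitehead automorphisms, a primitive $\phi(a_1)$ that avoids every piece of the cover. In $F_2$ this flexibility is absent and primitives admit a much tighter Nielsen classification tied to the abelianization $\mathbb{Z}^2$, which is precisely why the rank restriction appears.

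The main obstacle is this last step: making precise how the richness of $\mathrm{Aut}(F_n)$ for $n\geq 3$ prevents $P$ from lying in a negligible definable set of the form prescribed by Theorem~\ref{general}. Carrying this out will require combining the structural description of algebraic sets in $F$ (as parametrized families of low combinatorial complexity) with an explicit construction of Nielsen/Whitehead automorphisms that dodge any prescribed finite collection of such families, so that the resulting primitives collectively exhibit independent algebraic behavior incompatible with negligibility.
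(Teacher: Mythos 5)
Your overall strategy matches the paper: invoke the negligible/co-negligible dichotomy for definable sets and rule out both alternatives for the set $P$ of primitives. Your argument for non-co-negligibility is essentially right in substance — $[F,F]$ is a non-cyclic subgroup contained in $F\setminus P$ — but the justification you offer (``cannot be covered by a finite union of proper algebraic subsets'') is not the right formal criterion. What one actually uses is Proposition~\ref{BF3}(3): any set containing a coset of a non-cyclic subgroup fails to be negligible, because a sequence like $\{fxyxy^{2}x\cdots xy^{i}x\}_{i\in\mathbb{N}}$ with $[x,y]\neq 1$ has longest repeated pieces of length $O(i)$ inside words of length $\Theta(i^{2})$, so the piece-to-length ratio tends to $0$. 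Applied to the non-cyclic subgroup $\langle[a,b],\,c^{-1}[a,b]c\rangle\subseteq F\setminus P$ (or to $[F,F]$, which works equally well), this gives non-co-negligibility of $P$.

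The genuine gap, which you yourself flag, is the non-negligibility of $P$ itself. Your appeal to the ``richness of $\mathrm{Aut}(F_n)$'' and to dodging putative covers by Whitehead moves is a proof obligation, not a proof, and it is not the route the paper takes. The paper's argument is short and explicit and again reduces to Proposition~\ref{BF3}(3): choose basis elements $a,b,c$ (here is exactly where $\mathrm{rank}(F)\geq 3$ is used) and set $F_2=F(a,b)$. Then the coset $cF_2$ consists entirely of primitive elements, since for any $w\in F_2$ the map fixing $a,b$ (and the remaining basis elements) and sending $c\mapsto cw$ is an automorphism. So $P\supseteq cF_2$, a coset of the non-cyclic subgroup $F_2$, hence $P$ is not negligible. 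Replacing your intended Whitehead-automorphism argument with this one-line coset observation closes the gap and reproduces the paper's proof.
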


Notice that the set of all bases in $F_2 = F_2(a, b)$ is definable.
This is based on Nielsen's theorem: elements $g; h\in F_2$ form a
basis if and only if the commutator $[g, h]$ is conjugated either to $[a, b]$ or $[b, a]$.
Hence the set of bases in $F_2$ is defined by the following formula
$$\phi (p_1, p_2) = \exists z([p_1, p_2] = z^{-1}[a, b]z\vee [p_1, p_2] = z^{-1}[b, a]z).$$
The set of all primitive elements in
$F_2 = F_2(a, b)$ is defined by the following formula
$$\psi (p_1) = \exists p_2 \phi (p_1, p_2).$$

The results on hyperbolic groups will be presented in the last section.

\section{Conjunctive $\exists\forall$-formulas}

 The first step to analyze the structure of definable sets is to reduce it to the study of the structure of $\forall\exists$-definable sets.

\begin{theorem} \cite{Sela},\cite{KMel} Every formula in the theory of $F$ is equivalent to a boolean combination of $\forall\exists$-formulas.
\end{theorem}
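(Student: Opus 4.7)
The plan is to induct on the number of quantifier alternations in a prenex normal form of the given formula. The base cases (quantifier-free, purely existential, purely universal, and $\forall\exists$ formulas themselves) are immediate. For the inductive step, standard prenex manipulations---pulling like quantifiers into a single block, distributing $\exists$ over $\vee$, and using De Morgan together with the fact that $\neg(\forall\exists)$ is $\exists\forall$---reduce the question to a single core reduction: every $\exists\forall\exists$-formula
\[
\phi(\bar u)\; \equiv\; \exists\bar x\,\forall\bar y\,\exists\bar z\,\theta(\bar u,\bar x,\bar y,\bar z),
\]
with $\theta$ a conjunction of equations and inequations in $F$, is equivalent to a boolean combination of $\forall\exists$-formulas in $\bar u$.

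To carry out this reduction I would regard the inner $\forall\bar y\,\exists\bar z$-block as an $\forall\exists$-condition on the parameters $(\bar u,\bar x)$ and seek a boolean combination of $\forall\exists$-conditions on $\bar u$ alone that describes exactly those $\bar u$ for which a suitable $\bar x$ exists. The machinery that makes this possible, developed in \cite{Sela} and \cite{KMel}, centers on parameterized systems of equations in $F$. Using a Makanin--Razborov diagram relative to the parameters $\bar u$, one stratifies the possible $\bar x$ into finitely many restricted limit groups, each equipped with a JSJ-type decomposition. On each stratum the truth of the inner $\forall\bar y\,\exists\bar z$-statement can be tested along a controlled family of \emph{test sequences} and their associated formal solutions, and the outcome of these tests can be translated back into $\forall\exists$-statements about $\bar u$. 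Taking the resulting finite disjunction over strata yields the required boolean combination.

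The main obstacle, and the reason the statement rests on the heavy cited machinery, is precisely this encoding step: one must produce, uniformly in $\bar u$, a \emph{first-order definable} description of which parameters $\bar x$ witness the $\exists\forall\exists$-statement, rather than a mere existence proof. This is the core content of the solution to the Tarski problem in \cite{Sela} and of the elimination process developed in \cite{KMel}; granted that single-step reduction, the induction on alternation depth is routine prenex bookkeeping and delivers the theorem.
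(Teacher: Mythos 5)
The paper does not itself prove this theorem: it is stated with citations to \cite{Sela} and \cite{KMel}, and the text immediately moves on to the sharper Theorem~\ref{prec}, whose proof takes Theorem~1 as a starting point and appeals to specific intermediate results of those papers (Lemma~10 of \cite{Imp}, Theorem~39 of \cite{KMel}). So there is no in-paper proof against which to compare your argument line by line; what can be compared is your reconstruction of the cited proof's architecture.

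Your outline does match that architecture at the level you pitch it. Reducing the general case to the single step ``every $\exists\forall\exists$-formula over $F$, with parameters, is equivalent to a boolean combination of $\forall\exists$-formulas,'' and then carrying out that step via Makanin--Razborov diagrams relative to the parameter subgroup, JSJ decompositions of the resulting (restricted) limit groups, test sequences, and formal solutions, is exactly the skeleton of Sela's quantifier-elimination papers and of Section~12 of \cite{KMel}. The prenex bookkeeping is legitimate: a boolean combination of $\forall\exists$-formulas lies in $\Sigma_3\cap\Pi_3$, so prefixing it with the remaining outer quantifier blocks strictly lowers the alternation count, and the induction closes. Two caveats worth flagging. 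First, neither cited paper literally runs the clean alternation-depth induction you describe; each instead builds a terminating iteration (Sela's sieve procedure, the finite tree $T_{X_k}$ and its levels in \cite{KMel}) whose \emph{finiteness} is itself a major theorem and is where most of the work sits---your phrase ``routine prenex bookkeeping'' risks hiding that the uniform, terminating version of the single-step reduction is the entire content, not a lemma to be invoked. Second, your sentence about ``a first-order definable description of which parameters $\bar x$ witness the statement'' is slightly off target: the set of witnessing $\bar x$ is of course already first-order definable by the $\forall\exists$-formula itself; what must be shown is that its \emph{projection} onto the $\bar u$-coordinates remains in the boolean algebra of $\forall\exists$-definable sets, and proving that projections do not increase quantifier complexity beyond $B(\forall\exists)$ is precisely the point of the whole machinery. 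With those clarifications your sketch is a fair high-level account of the cited proofs, but, like the paper, it ultimately defers the substance to \cite{Sela} and \cite{KMel}.
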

Furthermore, a more precise result holds.
\begin{theorem} \label{prec}
 Every set definable in $F$ is defined by some boolean combination of
formulas \begin{equation}\label{AE}
\exists X\forall Y (U(P,X)=1\wedge \neg (V(P,X,Y)=1)),
\end{equation}where $X,Y,P$ are tuples of variables.
\end{theorem}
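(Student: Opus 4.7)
The plan is to refine the preceding theorem. Since that theorem expresses every definable set as a Boolean combination of $\forall\exists$-formulas, and Boolean combinations are automatically closed under negation (which interchanges $\forall\exists$ and $\exists\forall$), it suffices to show that every single $\exists\forall$-formula
\[
\Phi \;=\; \exists X\,\forall Y\,\psi(P,X,Y),
\]
with quantifier-free matrix $\psi$, is equivalent to a Boolean combination of formulas of the shape (\ref{AE}).

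The first step would be to normalize the matrix $\psi$. In the language of groups, atomic formulas are equations $W=1$, so $\psi$ is a Boolean combination of equations $W(P,X,Y)=1$. I would write $\psi$ in disjunctive normal form and apply two standard free-group facts: in $F$, any finite conjunction of equations is equivalent to a single equation, and dually any finite disjunction of inequations is equivalent to a single inequation. This brings $\psi$ into the shape
\[
\psi \;\equiv\; \bigvee_{k}\Bigl(U_k(P,X,Y)=1\ \wedge\ \bigwedge_{j}\neg\bigl(V_{kj}(P,X,Y)=1\bigr)\Bigr),
\]
with exactly one positive equation per disjunct.

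The main technical step is then to push the universal quantifier $\forall Y$ through the disjunction over $k$ and to strip the $Y$-dependence from the positive equation, so that the final form matches (\ref{AE}), in which the positive equation $U$ depends only on $P,X$ and only one negated equation $V$ appears. Here I would appeal to the structural machinery of \cite{KMel}: via Makanin--Razborov diagrams and the geometry of limit groups, the universal condition $\forall Y\,U_k(P,X,Y)=1$ translates into a system of equations in $(P,X)$ alone (essentially the triviality of the $Y$-coefficients of $U_k$), which combines into a single equation by the free-group fact already invoked. The several inequations within each disjunct and the outer disjunction over $k$ are then absorbed at the level of the Boolean combination, possibly after introducing auxiliary existential variables to encode which disjunct is realized and to serve as witnesses for the remaining $Y$-dependence.

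The principal obstacle is precisely this last manoeuvre: because $\forall Y$ does not distribute over $\vee$, and because a single $\exists X$ must simultaneously witness several universal inequations sharing the same tuple $X$, the rearrangement cannot be carried out by propositional logic alone and genuinely depends on the algebraic--geometric analysis of equations over $F$ developed in \cite{KMel}.
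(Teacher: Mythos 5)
Your reduction to a single $\exists\forall$-formula is sound, and the DNF normalization is harmless. But the argument never actually closes the gap you yourself flag at the end, and the specific mechanism you propose for closing it is not the right one. You suggest that the key move is to ``strip the $Y$-dependence from the positive equation'' by noting that $\forall Y\,U_k(P,X,Y)=1$ reduces to a system in $(P,X)$ alone. That observation is true in isolation (a free group satisfies no non-trivial identities), but it does not apply to the formula at hand: the universal quantifier sits over the whole disjunction, not over $U_k$ alone, so you cannot separate off the positive equation and quantify it universally by itself. Once the disjuncts share the bound $Y$, the clean triviality argument is unavailable, and you are back to exactly the obstruction you identified. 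Saying the rest ``genuinely depends on the algebraic--geometric analysis of equations over $F$'' names the right territory but contains no argument.

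The paper proceeds differently, and the difference matters. It does not attempt a DNF analysis. Instead it first applies Lemma 10 of \cite{Imp} to put a single $\exists\forall$-formula into the specific implication form $\exists X\,\forall Y\,(V(X,Y,P)=1\rightarrow U(Y)=1)$, then observes this is trivially of the shape $\forall Z\,\exists X\,\forall Y\,(\cdot)$ considered in Section~12.7 of \cite{KMel}. The heavy lifting is done by Theorem~39 of \cite{KMel}: the sieve/stratification procedure there shows that the \emph{failure} of this formula at $\bar P$ is equivalent to a conjunction of disjunctions of formulas of exactly two shapes (here numbered (\ref{39}) and (\ref{40})), and it is a feature of that analysis --- counting Max-classes of algebraic solutions over groups with no sufficient splitting --- that in each such formula the positive equation constrains only the existentially quantified tuple and the parameters, never the universally quantified variables. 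Shape (\ref{39}) is literally of form (\ref{AE}), and the negation of shape (\ref{40}) is of form (\ref{AE}); that is the whole proof. So the needed structural fact is not ``$Y$-coefficients cancel in each disjunct'' but ``the stratification of parameter space by proof systems produces defining conditions in which the positive part is $Y$-free.'' You would need to cite and use Theorem~39 of \cite{KMel} (or reprove its output form), rather than a generic appeal to Makanin--Razborov diagrams, for your sketch to become a proof.
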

We call  formulas in the form (\ref{AE}) where $U(P,X)=1$ and $V(P,X,Y)=1$ are either equations or finite systems of equations, {\em conjunctive $\exists\forall$-formulas}.  Notice that in the language $L_A$  every finite system of equations in $F$ and in $G$ is equivalent to one equation. For $F$ this is Malcev's result, see also Lemma 3 in \cite{Imp}, for $G$ this is  Lemma \ref{onehyp}.   Therefore  we can assume that
$U(P,X)=1$ and $V(P,X,Y)=1$ are equations (although this is not essential for the proof of the main results). Every finite disjunction of equations in $F$ and $G$ is equivalent to one equation. This is attributed to Gurevich for $F$, see also Lemma 4 in \cite{Imp}, and the same proof works for $G$.

\begin{proof}  By Theorem 1, every definable set is defined by a boolean combination of $\exists\forall$-formulas. By Lemma 10 \cite{Imp}, every  $\exists\forall$-formula is equivalent to
$$ \exists X\forall Y (V(X,Y,P)=1\rightarrow U(Y)=1),$$
where $X,Y,P$ are tuples of variables.

This formula has form (25) in \cite{KMel}, namely
\begin{equation}\label{38} \phi (P)=\forall Z \exists X\forall Y (V(X,Y,P)=1\rightarrow U(Y)=1).\end{equation}
The proof of Theorem 39  in Section 12.7 of \cite{KMel} shows that the formula  $\phi (P)$ is false for a value $\bar P$ of the variables $P$ if and only if the conjunction of disjunctions of formulas of the  two types given below is true for $\bar P$. We will write these formulas in the same form as they appear in Section 12.7 of \cite{KMel}. Notice that instead of the union of variables $X_1,Y_1,\ldots ,X_{k-1}$ in these formulas we take variables $P$.

\begin{equation}\label{39}
\exists Z_{k-1}\forall B,C (U(P,Z_{k-1})=1\wedge V(P,Z_{k-1},B,C)\neq 1),
\end{equation}
\begin{equation}\label{40}
\forall Z_{k-1}\exists B(U'(P, Z_{k-1})=1\rightarrow V'(P,Z_{k-1},B)=1.\end{equation}

 The first formula is in the form (\ref{AE}). The negation of
the second formula is also in the form (\ref{AE}).

We give more details on how the formulas (\ref{39}) and (\ref{40}) are obtained in \cite{KMel} in the Appendix.
\end{proof}

\section{Diophantine sets and $\exists$-sets}

\begin{definition} A {\em piece} of a word $u\in F$ is a non-trivial subword $v$ that appears in at least two different ways (maybe the second time as $v^{-1}$, maybe with overlapping).

 A {\em piece} of a tuple of reduced words $(u_1,\ldots ,u_m)$, $u_j\in F$ is a non-trivial subword $v$ that appears in at least two different ways as a subword of  some words of $u_1,\ldots ,u_m$.
\end{definition}

\begin{definition}\label{def2}
A proper  subset $P$ of $F$ admits parametrization if it is a set of all  words $p$ that satisfy a given system of equations  (with coefficients) without cancellations in the form \begin{equation}\label{p}
p\circeq w_t(y_1,\ldots ,y_n),  t=1,\ldots ,k, \end{equation}
where for all $i=1,\ldots ,n$,  $y_i\neq 1$, each $y_i$ appears at least twice in the system and each variable $y_i$ in $w_1$ is a piece of $p$.

A proper  subset $P$ of $F^m$ admits parametrization if after permutation of indices it is a  product set of $F^k, k<m$ and a set of all  tuples of words $p_j, j=1,\ldots ,m-k$ that satisfy a given system of equations  (with coefficients) without cancellations in the form \begin{equation}\label{p}
p_j\circeq w_{tj}(y_1,\ldots ,y_n),  t=1,\ldots ,k_j, \end{equation}
where for all $i=1,\ldots ,n$,  $y_i\neq 1$, each $y_i$ appears at least twice in the system and each variable $y_i$ in each $w_{1j}$ is a piece of the tuple.

The empty set and one-element subsets of $F^m$ admit parametrization.

A finite union of sets admitting parametrization will be called a {\em  multipattern}. A subset of a multipattern will be called {\em a sub-multipattern}
\end{definition}

In this section we will give a description of Diophantine sets and $\exists$-sets in $F$.
\begin{theorem} \label{E-set} Suppose a  Diophantine set $P\subseteq F^m$  defined by the formula $$\psi (P)=\exists Y U(Y,P)=1,$$
where $U(Y,P)=1$ is a system of equations,
is not the whole group $F^m$. Then  $P$  is a multipattern.\end{theorem}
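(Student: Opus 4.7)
The plan is to apply the Makanin-Razborov machinery from \cite{KMel} to the system $U(Y,P)=1$. By that theory, the set of all solutions of $U(Y,P)=1$ in $F$ is captured by a finite Makanin-Razborov diagram whose leaves carry free groups: every solution $(Y,P)$ factors through one of finitely many branches, on each of which $P$ is given by a fixed tuple of words in the free parameters at the leaf. Projecting each branch onto the $P$-coordinates yields a finite family of subsets of $F^m$ whose union is $P$, and since a finite union of multipatterns is again a multipattern by Definition \ref{def2}, it suffices to analyze a single branch.

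On a single branch I would read off expressions $p_j\circeq w_{tj}(y_1,\dots,y_n)$, where the $y_i$ are the free parameters of the leaf and the $w_{tj}$ are fixed words. The several values of $t$ correspond either to the several equations imposed by $U(Y,P)=1$ or to the several edges along which a given $p_j$ is traced in the associated fundamental sequence; they supply the multiple factorizations of $p_j$ required by Definition \ref{def2}. Because $U(Y,P)=1$ is an equation in a free group, each of its components becomes a cancellation identity after substitution, and I would argue that for a generic choice of parameters these cancellations do not shorten any of the $w_{tj}$, so that the representations $p_j\circeq w_{tj}$ hold without cancellation.

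To secure the two syntactic conditions of Definition \ref{def2} I would then absorb singleton parameters. If some $y_i$ appears only once in the entire system of concatenations, that parameter ranges freely over $F$, and a change of variables rewrites the corresponding coordinate $p_j$ as a free variable; this coordinate then contributes to the factor $F^k$ in the product decomposition of Definition \ref{def2}. After finitely many such reductions every remaining $y_i$ appears at least twice. The piece condition follows by construction: one occurrence of $y_i$ is visible as a subword of $w_{1j}\circeq p_j$, and a second occurrence supplies a second appearance as a subword of some $w_{t'j'}\circeq p_{j'}$, making $y_i$ a piece of the remaining tuple.

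The main obstacle will be the passage to the without-cancellation form. In general the variables $Y$ of a solution may cancel substantially against each other and against $P$, and one must organize the Makanin-Razborov analysis so that on each branch the cancellation pattern is stable and the surviving substring structure can literally be read off as the words $w_{tj}$. This is the standard but delicate content of the algorithm from \cite{KMel}; the hypothesis $P\neq F^m$ is used only to ensure that at least one of the resulting parametrizations is genuinely constrained, so that the output is a multipattern rather than the whole of $F^m$.
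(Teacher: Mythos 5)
Your proposal correctly identifies \cite{KMel} as the source of the machinery and correctly isolates the central difficulty (passage to a cancellation-free decomposition), but it stops precisely where the real work begins. The tool that the paper actually uses, and that your sketch is missing, is the \emph{cut equation} (Definition~\ref{df:cut} and Lemma~\ref{th:cut}, which is Theorem~8 of \cite{Imp}). The Makanin--Razborov branch you invoke gives, at a leaf, a \emph{single} expression of $p_j$ as a word in the leaf's free parameters, and that expression may involve cancellation; nothing in the fundamental-sequence picture by itself produces the several simultaneous cancellation-free partitions $p_j\circeq w_{tj}$ that Definition~\ref{def2} requires. Cut equations deliver exactly this: for each interval labeled $p_j$ you get a finite set of partitions all holding graphically, and the multiplicity of partitions comes from the combinatorics of bases covering that interval in the generalized equation, not from ``the several equations imposed by $U(Y,P)=1$'' nor from ``edges of the fundamental sequence.'' Your appeal to ``generic parameters so the cancellations do not shorten $w_{tj}$'' is not adequate either, since the definition requires the parametrization to capture \emph{all} of $P$, not just generic members of it.

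The second gap is in your verification of the piece condition. Appearing twice in the system of concatenations does not make the value of $y_i$ a piece of the tuple: two occurrences at the same position across two partitions of the same $p_j$ (what the paper calls \emph{matching} occurrences) give only one appearance as a subword. The paper's argument hinges on exactly this distinction: if in every partition of some $p_j$ there is a variable with only matching occurrences, that variable can be excised and $p_j$ is shown to range over all of $F$, reducing $m$ by one; otherwise there is a partition in which every variable has a genuinely second, non-matching occurrence and hence really is a piece. Your reduction ``absorb singleton parameters and then every remaining $y_i$ appears at least twice'' collapses this case analysis and would let through variables that occur twice but never as a piece. To repair the proof you need to run the argument on the cut equations of Lemma~\ref{th:cut}, track matching versus non-matching occurrences, and use item~(3) of that lemma (group solutions of the cut equation still give solutions of $U=1$) to justify the excision step.
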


We will prove this result using the notion of a cut equation introduced in \cite{Imp}, Section 5.7.

\begin{definition}\label{df:cut}
A cut equation $\Pi = ({\mathcal E}, M, X, f_M, f_X)$ consists of
a set of intervals $\mathcal E$, a set of variables $M$, a set of
parameters $X$, and two  labeling functions $$f_X: {\mathcal E}
\rightarrow F[X], \ \ \ f_M:  {\mathcal E} \rightarrow F[M] .$$
For an interval $\sigma \in {\mathcal E}$ the image  $f_M(\sigma)
= f_M(\sigma)(M)$ is a reduced word in variables $M^{\pm 1}$ and
constants from $F$, we call it a {\it partition}  of
$f_X(\sigma)$.
\end{definition}

Sometimes we write $\Pi = ({\mathcal E}, f_M, f_X)$ omitting $M$
and $X$.

\begin{definition}
A solution of a cut equation $\Pi = ({\mathcal E}, f_M, f_X)$ with
respect to an $F$-homomorphism $\beta : F[X] \rightarrow F$  is an
$F$-homomorphism $\alpha : F[M] \rightarrow F$ such that: 1) for
every $\mu \in M$ $\alpha(\mu)$ is a reduced non-empty word; 2)
for every reduced word $f_M(\sigma)(M)\ (\sigma \in {\mathcal E})$
the replacement $m \rightarrow \alpha(m) \ (m \in M) $ results in
a word $f_M(\sigma)(\alpha(M))$ which is  a  reduced word as
written and  such that $f_M(\sigma)(\alpha(M))$ is graphically
equal to the reduced form of $\beta(f_X(\sigma))$; in particular,
the following diagram is commutative.
\begin{center}
\begin{picture}(100,100)(0,0)
\put(50,100){$\mathcal E$} \put(0,50){$F(X)$} \put(100,50){$F(M)$}
\put(50,0){$F$} \put(47,97){\vector(-1,-1){30}}
\put(53,97){\vector(1,-1){30}} \put(3,47){\vector(1,-1){30}}
\put(97,47){\vector(-1,-1){30}} \put(16,73){$f_X$}
\put(77,73){$f_M$} \put(16,23){$\beta$} \put(77,23){$\alpha$}
\end{picture}
\end{center}
\end{definition}

If $\alpha: F[M] \rightarrow F$ is a solution of a cut equation
$\Pi = ({\mathcal E}, f_M, f_X)$ with respect to an
$F$-homomorphism $\beta : F[X] \rightarrow F$, then we write
$(\Pi, \beta,\alpha)$ and refer to $\alpha$ as a \emph{solution
of} $\Pi$ {\it modulo} $\beta$. In this event,   for a given
$\sigma \in {\mathcal E}$ we say that $f_M(\sigma)(\alpha(M))$ is
a {\it partition} of $\beta(f_X(\sigma))$. Sometimes we also
consider homomorphisms $\alpha:F[M] \rightarrow F$, for which the
diagram above is still commutative, but cancellation may occur in
the words $f_M(\sigma)(\alpha(M))$. In this event we refer to
$\alpha$ as a {\em group} solution of $\Pi$ with respect to
$\beta$.

Definition of a generalized equation can be found in \cite{Imp} (Definition 8, \cite{Imp}). This is one of the principal objects in our work on equations in groups. The following result states that every generalized equation is equivalent to a certain cut equation.
\begin{lemma} [Lemma 34,\cite{Imp}]
\label{le:cut}
 For a generalized equation $\Omega(H)$  one can effectively construct a cut
equation
 $\Pi_{\Omega} = ({\mathcal E}, f_X, f_M)$ such that  the following
conditions hold:
\begin{enumerate}
 \item [(1)] $ X$ is a partition of the whole interval $[1,\rho_{\Omega}]$ into
disjoint
  closed subintervals;

 \item  [(2)] $M$ contains the set of variables $H$;

 \item [(3)] for any solution $U = (u_1, \ldots, u_\rho)$ of $\Omega$  the cut
equation
 $\Pi_{\Omega}$ has a solution
 $\alpha$ modulo the canonical homomorphism $\beta_U: F(X) \rightarrow
F$
 ($\beta_U(x) = u_i u_{i+1} \ldots u_j$ where $i,j$ are,
correspondingly,
  the left and the right  end-points of the interval $x$);

 \item [(4)] for any solution $(\beta,\alpha)$ of the cut equation $\Pi_{\Omega}$
the
 restriction of $\alpha$ on $H$ gives a solution of the generalized
equation
 $\Omega$.
 \end{enumerate}
 \end{lemma}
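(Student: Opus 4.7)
The plan is to read off $\Pi_\Omega$ directly from the combinatorial skeleton of the generalized equation $\Omega(H)$. First I would place cut-points at every endpoint of every base of $\Omega$, thus partitioning $[1,\rho_\Omega]$ into a disjoint family of closed subintervals, which I take as the set $X$. This guarantees that every base $\lambda$ of $\Omega$ is a concatenation of a consecutive run of $X$-intervals, and simultaneously a concatenation of a consecutive run of items from $H$. Conditions (1) and (2) are built into this set-up provided we take $M\supseteq H$ (possibly adding auxiliary variables, to be discussed below).

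For the set $\mathcal{E}$, I would introduce one interval $\sigma_\lambda$ for each base $\lambda$ of $\Omega$. Using the two ways of reading off $\lambda$, I set $f_X(\sigma_\lambda)$ equal to the product (left to right) of those $x\in X$ comprising $\lambda$, and $f_M(\sigma_\lambda)$ equal to the product of the items $h\in H$ comprising $\lambda$. Base identifications in $\Omega$ (the requirement that a base and its dual carry the same word up to orientation) are then encoded by the pair $\sigma_\lambda,\sigma_{\Delta(\lambda)}$: both carry $f_X$-labels recording how the respective regions sit inside the partition $X$, while their $f_M$-labels list the items of each region. Once this data is fixed, condition (3) is almost automatic: given $U=(u_1,\dots,u_\rho)$ solving $\Omega$, put $\alpha(h)=u_h$ for $h\in H$; then $f_M(\sigma_\lambda)(\alpha(M))$ is by construction the concatenation of the $u_i$'s along that base, while $\beta_U(f_X(\sigma_\lambda))$ is the concatenation of the same $u_i$'s grouped by $X$-intervals, so the two sides agree in $F$. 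Condition (4) is symmetric: the equations $f_M(\sigma)(\alpha)=\beta(f_X(\sigma))$ for all $\sigma\in\mathcal{E}$ precisely re-express the base equations of $\Omega$, so $\alpha|_H$ solves $\Omega$.

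The main obstacle, and the only nontrivial part, is to guarantee the \emph{graphical} (non-cancellation) clause in the definition of a solution of a cut equation: $f_M(\sigma)(\alpha(M))$ must be reduced as written, and graphically equal to the reduced form of $\beta(f_X(\sigma))$, not just equal in $F$. Since cancellations between consecutive items $u_i u_{i+1}$ are perfectly possible in an arbitrary solution of $\Omega$, one cannot expect this directly. To enforce it, I would enlarge $M$ by introducing auxiliary variables that split items across any internal $X$-endpoints and, more importantly, refine the labels $f_M(\sigma_\lambda)$ so that items touching each other inside a base are recorded in a way that commits to the boundary between them. This is the technical heart of the construction, and it is exactly the reason one needs the machinery of Section 5.7 of \cite{Imp} rather than a one-line reduction. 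Once the non-cancellation bookkeeping is built into $M$ and into the labels $f_M$, the verification of (3) and (4) follows by routine inspection of the base-by-base combinatorics.
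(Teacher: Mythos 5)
Your construction does not actually encode the base equations of $\Omega$, so condition~(4) fails. You introduce one interval $\sigma_\lambda\in\mathcal{E}$ per base $\lambda$, with $f_X(\sigma_\lambda)$ the run of $X$-intervals under $\lambda$ and $f_M(\sigma_\lambda)$ the run of items under $\lambda$, and you say that duality is ``encoded by the pair $\sigma_\lambda,\sigma_{\Delta(\lambda)}$.'' But a cut equation imposes only the per-interval constraints $f_M(\sigma)(\alpha(M))=\beta(f_X(\sigma))$, and in the setting of condition~(4) \emph{both} $\beta$ and $\alpha$ are arbitrary. Since $\lambda$ and $\Delta(\lambda)$ sit over disjoint regions of $[1,\rho_\Omega]$, the words $f_X(\sigma_\lambda)$ and $f_X(\sigma_{\Delta(\lambda)})$ involve different letters of $X$, so $\beta$ is free to assign them completely unrelated values. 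Nothing in your $\Pi_\Omega$ forces the product of items under $\lambda$ to equal (up to orientation) the product of items under $\Delta(\lambda)$, which is precisely the content of the generalized equation.

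The paper's construction handles this with two devices you omit. First, $M$ is not just $H$ with ``auxiliary'' variables for splitting: it is $M=H\cup B$, where $B$ contains one chosen representative from each dual pair of bases, so that $\mu$ and $\Delta(\mu)$ are literally the \emph{same} letter of $M$. Second, $\mathcal{E}$ is not indexed by bases; it is indexed by all \emph{partitions} (tilings by items and bases) of each fixed $\sigma\in X$, with $f_X$ collapsing every partition of $\sigma$ to the single letter $\sigma$. Because every partition of the same $\sigma$ must map under $\alpha$ to the same word $\beta(\sigma)$ graphically, one gets $\alpha(\mu)^{\varepsilon(\mu)}=\alpha(h_i\cdots h_j)$ whenever $\mu$ covers the items $h_i,\dots,h_j$; combining this for $\mu$ and for $\Delta(\mu)$ (which use the same $M$-letter) recovers the base equation. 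That is the actual mechanism by which $(4)$ holds, and it is absent from your proposal.

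Finally, the ``main obstacle'' you identify --- enforcing the graphical non-cancellation clause --- is not an obstacle at all. A solution of a generalized equation, by definition, assigns reduced words to the items whose concatenation along $[1,\rho_\Omega]$ is reduced as written. Setting $\alpha(m)=u_i\cdots u_j$ for $s(m)=[i,j]$ therefore automatically yields reduced, graphically matching words; no bookkeeping is required. The real technical content is the multi-partition structure of $\mathcal{E}$ and the inclusion of $B$ in $M$, which is what your proposal is missing.
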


The proof given below is verbatim the one cited but it is given in the paper to make it more self-contained and because some features of the construction are implicitly used in the proof of Theorem \ref{E-set}. All undefined notions used in the proof can be found in \cite{Imp}.

\begin{proof}
 We begin with defining the sets $X$ and $M$. Recall that  a
closed interval of $\Omega$ is a union of closed sections of
$\Omega$.  Let $X$ be an arbitrary partition of the whole interval
$[1,\rho_{\Omega}]$ into closed subintervals (i.e., any two
intervals in $X$ are disjoint and the union of $X$ is the whole
interval $[1,\rho_{\Omega}]$).

 Let $B$ be a set of representatives of dual bases of $\Omega$, i.e.,
for every base
  $\mu$  of $\Omega$ either $\mu$ or $\Delta(\mu)$ belongs to $B$, but
not both.
 Put $M = H \cup B$.

Now let $\sigma \in X$.  We denote  by $B_{\sigma}$  the set of
all bases over $\sigma$ and by  $H_\sigma$ the set of all items in
$\sigma$. Put $S_\sigma = B_{\sigma} \cup H_\sigma.$  For $e \in
S_\sigma$ let $s(e)$ be the interval $[i,j]$, where $i < j$ are
the endpoints of $e$.  A sequence  $P = (e_1, \ldots,e_k)$ of
elements from $S_\sigma$  is called a {\it partition} of $\sigma$
if $s(e_1) \cup \cdots \cup s(e_k) = \sigma$ and $s(e_i) \cap
s(e_j) = \emptyset$ for $i \neq j$.  Let ${\rm Part}_\sigma$ be
the set of all partitions of $\sigma$. Now put
 $${\mathcal E} = \{P \mid P \in {\rm Part}_\sigma, \sigma \in X\}.$$
Then for every $P \in {\mathcal E}$
 there exists one and only one $\sigma \in X$ such that $P \in
{\rm Part}_\sigma$.
 Denote this $\sigma$ by $f_X(P)$. The map $f_X: P \rightarrow f_X(P)$
is a
 well-defined function from ${\mathcal E}$ into $F(X)$.

 Each partition $P = (e_1, \ldots,e_k) \in {\rm Part}_\sigma$ gives rise to a
word
  $w_P(M) = w_1 \ldots w_k$  as follows.  If $e_i \in H_\sigma$ then
$w_i = e_i$.
  If $e_i = \mu \in B_\sigma$ then $w_i = \mu^{\varepsilon(\mu)}$.
  If $e_i = \mu$ and $\Delta(\mu) \in B_\sigma$ then
  $w_i = \Delta(\mu)^{\varepsilon(\mu)}$. The map $f_M(P) = w_P(M)$ is a
  well-defined function from ${\mathcal E}$ into $F(M)$.

  Now set $\Pi_{\Omega} = ({\mathcal E}, f_X, f_M)$.  It is not hard to see
from the
  construction that the cut equation $\Pi_\Omega$ satisfies all the
required properties.
  Indeed, (1) and (2) follow directly from the construction.

  To verify (3), let's consider a
  solution $U = (u_1, \ldots, u_{\rho_\Omega})$ of $\Omega$. To define
  corresponding functions $\beta_U$ and $\alpha$, observe that the
function $s(e)$
  (see above) is defined for every $e \in X \cup M$.
   Now  for $\sigma \in X$ put
 $\beta_U(\sigma) = u_i\ldots u_j$, where $s(\sigma) = [i,j]$, and for $m
\in M$
 put $\alpha(m) =  u_i\ldots u_j$, where $s(m) = [i,j]$.  Clearly,
$\alpha$ is a
 solution of $\Pi_\Omega$ modulo $\beta$.

 To verify (4) observe that if $\alpha$ is a solution of $\Pi_\Omega$ modulo
$\beta$,
  then the restriction of $\alpha$ onto the subset $H \subset M$ gives a
solution
 of the generalized equation $\Omega$. This follows from the
construction
 of the words $w_p$ and the fact that the words $w_p(\alpha(M))$ are
reduced as
 written (see definition of a solution of a cut equation). Indeed, if a
base
 $\mu$ occurs in a partition $P \in {\mathcal E}$, then there is a partition
 $P^\prime \in {\mathcal E}$  which is obtained from $P$ by replacing $\mu$
by the
 sequence $h_i\ldots h_j$. Since there is no cancellation in words
$w_P(\alpha(M))$ and
 $w_{P^\prime}(\alpha(M))$, this implies that
 $\alpha(\mu)^{\varepsilon(\mu)} = \alpha(h_i\ldots h_j)$. This shows
that
 $\alpha_H$ is a solution of $\Omega$.
 \end{proof}

We will give an example. Let $\Omega$ be a generalized equation in Fig \ref{fig1}.
\begin{figure}[h]
\centering{\mbox{\psfig{figure=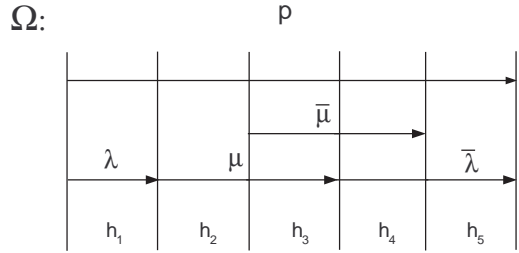,height=1.6in}}}
\caption{Generalized equation $\Omega$}
\label{fig1}
\end{figure}
 For the cut equation we will have $X=\{p\}$, $M=\{h_1,h_2,h_3,h_4,h_5, \lambda, \mu\},$ ${\mathcal E} =\{\sigma _1,
\sigma _2,\sigma _3,\sigma _4,\sigma _5\}$, $f_{X}(\sigma _i)=p,\  i=1,\ldots ,5.$ Further, $f_M(\sigma _1)=\lambda\mu\lambda,$
 $f_M(\sigma _2)=h_1h_2\mu h_5,$ $f_M(\sigma _3)=h_1h_2h_3h_4h_5,$ $f_M(\sigma _4)=h_1\mu\lambda,$ $f_M(\sigma _5)=h_1h_2h_3\lambda.$
 This gives five partitions for $p$.

\begin{lemma} [Theorem 8, \cite{Imp}] \label{th:cut}
Let $S(X,Y,A)) = 1$ be a system of  equations over $F = F(A)$.
Then one can effectively construct a finite set of cut equations
 $${\mathcal CE}(S) = \{\Pi_i \mid  \Pi_i =({\mathcal E}_i, f_{X_i},  f_{M_i}),
i = 1 \ldots,k \}$$
  and a finite set of tuples of words $\{Q_i(M_i) \mid i = 1,
\dots,k\}$  such that:
\begin{enumerate}
\item for every equation $\Pi_i =({\mathcal E}_i, f_{X_i},
f_{M_i}) \in {\mathcal CE}(S)$,
 one has $X_i = X$ and   $f_{X_i}({\mathcal E}_i) \subset  X^{\pm 1}$;

\item  for  any  solution $(U,V)$ of $S(X,Y,A) = 1$ in $F(A)$,
there exists a number $i$
 and a tuple of words $P_{i,V}$ such that the  cut
equation $\Pi_i \in {\mathcal CE}(S)$  has a solution $\alpha: M_i
\rightarrow F$ with respect to the $F$-homomorphism  $\beta_U:
F[X] \rightarrow F$ which is induced by the map $ X \rightarrow
U$. Moreover, $U = Q_i(\alpha(M_i))$, the word $Q_i(\alpha(M_i))$
is reduced as written,  and  $V = P_{i,V}(\alpha(M_i))$;

\item  for any $\Pi_i \in {\mathcal CE}(S)$ there exists a tuple
of words $P_{i,V}$ such that for  any solution (group solution)
$(\beta, \alpha)$ of  $\Pi_i$ the pair $(U,V),$ where $U =
Q_i(\alpha(M_i))$ and $V = P_{i,V}(\alpha(M_i)),$ is a solution of
$S(X,Y) = 1$ in $F$.

\end{enumerate}
\end{lemma}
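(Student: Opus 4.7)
My plan is to compose two classical constructions: first reduce the system $S(X,Y,A)=1$ to a finite family of generalized equations, and then apply Lemma \ref{le:cut} to each of them with a carefully chosen partition so that all three conclusions fall out simultaneously. For the first step, using the standard Makanin-style entry procedure recalled in \cite{Imp}, I effectively construct a finite list $\Omega_1,\ldots,\Omega_k$ of generalized equations such that every solution $(U,V)$ of $S$ in $F(A)$ induces, via a choice of cancellation scheme for the reduced words $s(U,V,A)$ with $s\in S$, a solution of some $\Omega_i$, and conversely every solution of each $\Omega_i$ yields a solution of $S$. I arrange the bookkeeping so that every occurrence of a variable $x\in X$ in the equations of $S$ corresponds to a specific closed subinterval of $[1,\rho_{\Omega_i}]$ labeled by $x$ or $x^{-1}$, and the union of these $X$-subintervals is all of $[1,\rho_{\Omega_i}]$, with items coming from $Y$ and from $A$ encoded inside the bases.

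For the second step, I apply Lemma \ref{le:cut} to each $\Omega_i$, taking the partition $X$ in that lemma to be the collection of $X$-labelled subintervals constructed above; this forces $X_i=X$ and $f_{X_i}({\mathcal E}_i)\subset X^{\pm 1}$, yielding (1). The lemma produces a cut equation $\Pi_i=\Pi_{\Omega_i}$ whose variable set $M_i$ contains the items $H$ of $\Omega_i$ together with representatives of dual bases. By construction, each $x\in X$ is a fixed concatenation of items in $H$; I assemble these into the tuple $Q_i(M_i)$. Similarly each $y\in Y$ is recoverable as a fixed word in items and bases dictated by the cancellation scheme of the first step, giving $P_{i,V}(M_i)$, where the index $V$ records the finitely many combinatorial choices for the $Y$-positions. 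For (2), given a solution $(U,V)$ of $S$, the first step yields a solution of some $\Omega_i$, and Lemma \ref{le:cut}(3) promotes it to a solution $\alpha$ of $\Pi_i$ modulo $\beta_U$ with $\beta_U(x)=U(x)$; then $U=Q_i(\alpha(M_i))$ and $V=P_{i,V}(\alpha(M_i))$ hold by construction, and the required reducedness is automatic since, by the definition of a solution of a cut equation, each $f_M(\sigma)(\alpha(M))$ is reduced as written. For (3), given any (group) solution $(\beta,\alpha)$ of $\Pi_i$, Lemma \ref{le:cut}(4) yields that $\alpha|_H$ solves $\Omega_i$, and the converse half of the first step then produces a solution of $S$ which, by the definitions of $Q_i$ and $P_{i,V}$, coincides with $(Q_i(\alpha(M_i)),P_{i,V}(\alpha(M_i)))$.

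The main obstacle is the bookkeeping in the first step: I need to arrange the family $\{\Omega_i\}$ so that the $X$-variables line up as complete closed subintervals for every generalized equation, while at the same time the $Y$-variables still admit finitely many fixed word expressions $P_{i,V}$ in the new variables $M_i$. Ensuring that $Q_i(\alpha(M_i))$ is automatically reduced as written forces the partition chosen for Lemma \ref{le:cut} to respect the cancellations already encoded in $\Omega_i$; once the enumeration of cancellation schemes is set up carefully this compatibility holds, but it is the essential nontrivial point tying the two constructions together.
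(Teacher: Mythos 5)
The paper does not prove this lemma: it is stated with the citation ``Theorem 8, \cite{Imp}'' and used as a black box, whereas only the preceding Lemma~\ref{le:cut} is supplied with a proof (which the authors say is reproduced ``verbatim'' from \cite{Imp} for self-containedness). So there is no in-paper proof to compare against. Your two-step plan --- pass from $S(X,Y,A)=1$ to finitely many generalized equations and then feed each $\Omega_i$ into Lemma~\ref{le:cut} with the partition given by $X$-labelled subintervals, reading off $Q_i$ and $P_{i,V}$ from the resulting items and bases --- is the right overall shape and is indeed the route taken in \cite{Imp}.

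There is, however, a genuine gap precisely at the point you flag as ``the main obstacle.'' The \emph{standard} Makanin-style entry procedure places the entire left-hand side $s(X,Y,A)$ of each equation of $S$ on the interval, so that $Y$- and $A$-occurrences occupy subintervals of $[1,\rho_{\Omega_i}]$ alongside the $X$-occurrences. In that setup, the $X$-labelled subintervals do \emph{not} partition the whole interval, so you cannot simply ``take the partition $X$ in Lemma~\ref{le:cut} to be the collection of $X$-labelled subintervals'' and get $f_{X_i}({\mathcal E}_i)\subset X^{\pm 1}$; any partition of the full interval will contain subintervals whose $f_{X_i}$-image is not a single $x^{\pm 1}$. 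What is actually needed --- and what Section~5.7 of \cite{Imp} carries out --- is a modified construction in which only the $X$-values live on the interval, while $Y$-values and coefficients are absorbed into the cancellation data so that each $y\in Y$ is recoverable as a fixed product of items and bases (which is what makes $V=P_{i,V}(\alpha(M_i))$ possible and what justifies calling the resulting object a generalized equation whose solutions still biject, per cancellation scheme, with those of $S$). Asserting ``I arrange the bookkeeping so that the $X$-subintervals cover the whole interval'' names the requirement rather than meeting it; a complete proof has to exhibit this arrangement and verify conclusions (2) and (3) against it.
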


{\bf Proof of Theorem \ref{E-set}.}
We think about  $U(Y,P)=1$ as a system of equations with coefficients $A$ and two sets of variables: $X=P$ and $Y$.
It follows from Lemma \ref{th:cut} (where the role of $S(X,Y,A)=1$ is taken by the system $U(P,Y)=1$), that one can effectively  construct a finite set of cut equations satisfying the conditions of the statement.  Consider one of the cut equations. We can assume that this cut equation has a solution.
It follows from the construction that there is at least one  interval $\sigma _j\in{\mathcal E}$ labeled by $p_j$ that is completely covered by variables from $M$ and constants (=coefficients).  We can assume that  each variable $y_i$ from $M$ appears at least twice in  the cut equation, otherwise we can remove the partition containing $y_i$ and express $y_i$ in terms of other variables and $p_i$. After removing the partition solutions satisfying this cut equation will not be  graphical solutions of the cut equation, but they will be  group solutions, and every group solution of the cut equation still provides a solution of the initial equation S(X,Y,A)=1 (see item 3 in Theorem \ref{th:cut}).
(Let us consider the example given above. We have $$p=\lambda\mu\lambda = h_1h_2\mu h_5=h_1h_2h_3h_4h_5=h_1\mu\lambda =h_1h_2h_3\lambda.$$
We remove $p=h_1h_2h_3h_4h_5,$ because $h_4$ is contained only once, then remove $p=h_1h_2\mu h_5$, because now $h_5$ is contained only once, then remove $p=h_1h_2h_3\lambda$, then $p=h_1\mu\lambda,$
and, finally, $p=\lambda\mu\lambda.$ Therefore $p$ can be arbitrary element of $F$.)

By assumption, formula $\psi (P)$ does not define the whole group   $F^{m}$. If formula $\psi (P)$  defines a finite or empty set, then the set $P$ is trivially a multipattern. Hence, we further assume that $\psi (P)$ does not define a finite or empty set. So at least one $f_X({\sigma _j})$  can be represented in several ways as a product without cancellation in the form (\ref{p}). Variables from $M$ correspond to pieces of a reduced word $f_X({\sigma _j})$ corresponding to $p_j$.   Suppose  there exists $p=p_j$ such that for each partition  $p_j=w_{ij},$ where $i=1,\ldots , k_j$ in (\ref{p}) there is a variable that appears only as a matching variable. Let $y$ be a variable in the partition $p_j=w_{1j}$ that appears only as a matching variable. Cutting $y$, if necessary, we can assume it does not intersect any boundaries. If it is covered by some variable $z$ in some other partition of $p_j$, and $z$ has a non-matching occurrence, then $y$ is a piece (see Figure \ref{fig2}).
\begin{figure}[h]
\centering{\mbox{\psfig{figure=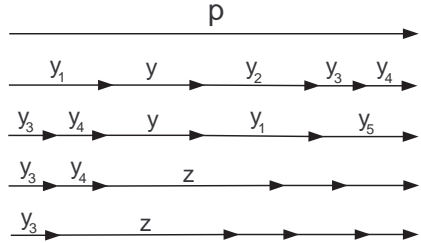,height=1.6in}}}
\caption{Matching variable $y$ is a piece}
\label{fig2}
\end{figure}

If all variables $z$ that cover $y$ in other partitions of $p_j$ only have matching occurrences, we represent each such $z$ as $z=z_1yz_2$.  To find a group solution of the cut equation we can now remove all occurrences of $y$,  solve the remaining system of equations for the remaining variables, and then express the value of the removed variable $y$ in terms of the remaining variables and $p_j$ (see Figure \ref{fig3}).
  \begin{figure}[h]
\centering{\mbox{\psfig{figure=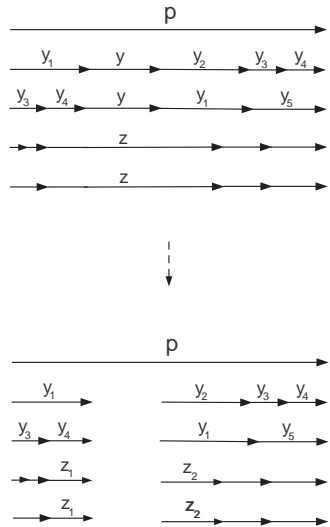,height=3in}}}
\caption{Matching variable $y$ is removed}
\label{fig3}
\end{figure}
  We can take any $p_j\in F$ in such a group solution provided the remaining variables satisfy the system of equations obtained by removing the matching variable $y$ (We denote this system by $Q(M)$).  This system has a solution, because we assumed that the cut equation has a solution. Therefore,  $p_j$ can be arbitrary element of $F$. If $m=1$ and $F^m=F$ this  contradicts to the assumption of the theorem, therefore in this case at least in one partition of $p_1=p_j$, $p_1=w_1(y_1,\ldots ,y_n)$, each variable is a piece. This proves for $m=1$ that the words $w_{t1}$ give a parametrization of the set $P$ satisfying properties of Definition \ref{def2} and hence $P$ is a multipattern. Therefore this proves  the theorem for $m=1$.  If $m>1$, we can assume that $j=m$, and $p_m$ can be arbitrary element of $F$. The existence of a group solution of the cut equation we began with for $p_1,\ldots ,p_{m-1}$ is equivalent to the existence of a solution of the system that consists of (\ref{p}) for $j=1,\ldots ,m-1$ and $Q(M)=1$.  We can now use induction on $m$. Since $\psi (P)$ defines neither $F^m$ nor a finite set nor an empty set, it follows by induction that
for  $p_j, j=1,\ldots , m-k$ words $w_{tj}$ give a parametrization of the set $P$ satisfying properties of Definition \ref{def2} and hence $P$ is a multipattern. Theorem \ref{E-set} is proved.

\begin{theorem}\label{EE-set}
  Suppose an  $\exists$-set $P$ is defined by the formula $$\psi _1(P)=\exists Y (U(Y,P)=1\wedge V(Y,P)\neq 1).$$  If the positive formula $\psi (P)=\exists Y (U(Y,P)=1$ does not define the   whole group $F^m$, then $P$ is a sub-multipattern. Otherwise, either $\neg P$ or $P$ is a sub-multipattern.
\end{theorem}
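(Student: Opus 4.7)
The plan is to reduce Theorem \ref{EE-set} to Theorem \ref{E-set} applied to two natural Diophantine supersets. Let $S_U = \{\bar p \in F^m : \exists Y,\ U(Y,\bar p)=1\}$, the Diophantine set defined by $\psi(P)$. Since any witness $Y$ for $\psi_1(\bar p)$ is also a witness for $\psi(\bar p)$, one has $P \subseteq S_U$.

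If $\psi(P)$ does not define $F^m$, then $S_U \subsetneq F^m$ and Theorem \ref{E-set} asserts that $S_U$ is a multipattern. Consequently $P \subseteq S_U$ is a sub-multipattern, which gives the first conclusion of the theorem.

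Otherwise $S_U = F^m$. I would introduce the second Diophantine set $S_{UV} = \{\bar p : \exists Y,\ U(Y,\bar p)=1 \wedge V(Y,\bar p)=1\}$; Diophantineness holds because a conjunction of equations in $F$ is equivalent to a single equation by Malcev's trick. The key set-theoretic identity is $F^m = P \cup S_{UV}$: for any $\bar p \in S_U = F^m$, pick $Y_0$ with $U(Y_0,\bar p) = 1$, and observe that the same $Y_0$ witnesses $\bar p \in S_{UV}$ or $\bar p \in P$ according as $V(Y_0,\bar p) = 1$ or $V(Y_0,\bar p) \neq 1$. Hence $\neg P \subseteq S_{UV}$. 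If $S_{UV} \subsetneq F^m$, Theorem \ref{E-set} yields that $S_{UV}$ is a multipattern and therefore $\neg P$ is a sub-multipattern, as required.

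The main obstacle is the remaining sub-case $S_U = S_{UV} = F^m$, in which both superset inclusions are vacuous. Here the plan is to revisit the cut-equation analysis behind Theorem \ref{E-set}: apply the cut-equation construction of Lemma \ref{th:cut} to the system $U(Y,P)=1$, and for each resulting cut equation $\Pi_i$ track the word into which $V(Y,P)$ transforms under the parametrization $Y = P_{i,V}(\alpha(M_i))$, $P = Q_i(\alpha(M_i))$. A cut equation of the parametrization type (Case (b) in the proof of Theorem \ref{E-set}) contributes to $P$ only a subset of an already-multipattern set, hence a sub-multipattern. A cut equation of the matching-variable type (Case (a)) leaves some $p_j$ free and rewrites the constraint $V \neq 1$ as an inequation $V'(M',p_j) \neq 1$ over a simpler cut-equation system, which is again an $\exists$-set of the form $\psi_1$. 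Inducting on a complexity measure of the cut-equation system (for instance, the number of equations or the number of $Y$-variables), each contribution $P_i$ is either a sub-multipattern or has complement a sub-multipattern; taking the union across all cut equations, if every $P_i$ is a sub-multipattern then so is $P = \bigcup_i P_i$, while if some $P_i$ has co-sub-multipattern complement then $\neg P \subseteq \neg P_i$ is a sub-multipattern. The delicate bookkeeping will be to choose a well-founded complexity measure strictly decreasing at each recursive step and to verify that the (sub-)multipattern dichotomy is preserved under free coordinates and finite unions, as prescribed by Definition \ref{def2}.
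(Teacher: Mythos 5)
Your first reduction (if $S_U\subsetneq F^m$, use Theorem~\ref{E-set} directly) matches the paper. Your second sub-case is a genuinely different and nicer shortcut than what the paper does: the set-theoretic identity $F^m = P \cup S_{UV}$ (hence $\neg P \subseteq S_{UV}$) is not in the paper, and when $S_{UV}\subsetneq F^m$ it gives the conclusion immediately from Theorem~\ref{E-set}, avoiding the cut-equation analysis of $U(Y,P)=1$ altogether. The paper instead builds an auxiliary formula $\psi_2(P)=\exists M(U_1(M)=1\wedge V_1(M,P)\neq 1)$ from the cut equation $\Pi_1$ that defines all of $F^m$, where crucially $U_1$ no longer involves $P$, and bounds $\neg P$ above by the set defined by $\neg\psi_2$. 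These bounds ($S_{UV}$ versus the set defined by $\neg\psi_2$) are not comparable in general, so yours is a genuinely alternative route for that sub-case.

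The difficulty is that the third sub-case $S_U = S_{UV} = F^m$ is the actual content of the theorem and your sketch does not close it. Your proposed recursion ``inducting on a complexity measure of the cut-equation system'' is not what makes the paper's argument work, and I do not see how to make it terminate as stated: after one pass through the cut equations for $U(Y,P)=1$, the matching-variable branches produce sets of the shape $\exists M(U_1(M)=1\wedge V_1(M,P)\neq 1)$, and applying your machinery to that again does not obviously reduce anything, since $V_1$ still involves $P$. The step you are missing is that after this one pass, $U_1(M)=1$ \emph{does not involve $P$} — this is what the cut-equation normal form buys you — and therefore the complement $\forall M(U_1(M)=1\rightarrow V_1(M,P)=1)$ is, by the Noetherian property (finitely many of the conditions $V_1(M_0,P)=1$ over solutions $M_0$ of $U_1=1$ suffice), equivalent to a \emph{variety} in $P$. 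A variety is either all of $F^m$ or a multipattern by Theorem~\ref{E-set}. If it is proper, $\neg P$ is a sub-multipattern and you are done; if it is all of $F^m$, that cut equation contributes nothing to $P$ and you move to the next of the finitely many cut equations. This Noetherian reduction to a variety, not a descending complexity induction, is the load-bearing idea, and it is absent from your outline. Without it the ``delicate bookkeeping'' you defer never resolves.

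Separately, a small imprecision: in the matching-variable case you write that the residual set is ``again an $\exists$-set of the form $\psi_1$''. It is not quite of the same form — the new equational part is in fresh cut-equation variables $M$ with the parameters $P$ removed — and that difference is precisely what makes the argument converge. Treating it as the same shape is what leads you to propose an open-ended induction instead of the one-step reduction plus Noetherianity that actually finishes the proof.
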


\begin{proof} If the positive formula $\psi (P)=\exists Y (U(Y,P)=1$ does not define the   whole group $F^m$, then by Theorem \ref{E-set} it defines a multipattern, and $P$ is a subset of this multipattern, therefore, a sub-multipattern.

Suppose now that $\psi (P)$ defines  the whole group $F^m$. Then the set of parameters defined by $\psi _1(P)$ contains the set of parameters defined by $\psi _2(P)=\exists Y_1 U_1(Y_1)=1\wedge V_1(Y_1,P)\neq 1.$ Let us prove this.
Parameters satisfying $\psi (P)$ are described by disjunction of cut equations, and one of the cut equations defines the whole group $F^m$ (denote it $\Pi _1$), therefore this cut equation can be described by the formula $\exists M U_1(M)=1$. This  follows from \cite{Imp}, Lemma 8 and also can be seen directly. Indeed, if $y$ has only matching occurrences we remove it from all partitions of  $p_j$, and  we have a  system of equations for the remaining variables that does not contain $p_j$.
Every tuple from $Y$ corresponding to the cut equation $\Pi _1$  can be represented as a function $Y=f(M,P,A).$ The system of equations consisting of $V(f(M,P,A),P)=1$ for all such functions $f(M,P,A)$, by the Noetherian property, is equivalent to one equation $V_1(M,P)=1$. If $\psi _1(\bar P)$ is false, then $\bar P$ satisfies the formula.
$$\neg\psi _2(\bar P)=\forall M (U_1(M)=1\rightarrow V_1(M,\bar P)=1).$$  Since there exists $M$ such that $U_1(M)=1,$ such tuples $\bar P$ either constitute the whole $F^m$ or form a disjunction of cut equations, therefore form a multipattern.
If $\neg\psi_2(P)$ defines a multipattern, then $\neg\psi_1(P)$ defines a sub-multipattern.
If for any $\bar P\in F^m$ we have $\neg\psi_2(\bar P)$, then $\psi_2(P)$ defines an empty set and we cannot use solutions of the cut equation $\Pi _1$ to obtain solutions of $U(Y,P)=1\wedge V(P,Y)\neq 1$. Then we consider the next cut equation $\Pi _2$ corresponding to the equation $U(Y,P)=1$ and defining the whole group $F^m$ (if exists) and  construct $\psi _2(P)$ the same way as we constructed it for $\Pi _1$. If one of the formulas  $\neg\psi_2(P)$ defines a multipattern, then $\neg P$ is a sub-multipattern, otherwise $P$ is a sub-multipattern.
\end{proof}

\section{Main Theorem}

In this section we will prove the main theorem.

\begin{theorem} \label{general} For every definable set $P\subseteq F^m$ in a free group $F$, either $P$ or its complement $\neg P$ is a sub-multipattern.
\end{theorem}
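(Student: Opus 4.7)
The plan is to use Theorem \ref{prec} to reduce to Boolean combinations of conjunctive $\exists\forall$-formulas, and then handle two sub-tasks: that the set defined by a single conjunctive $\exists\forall$-formula belongs to the class $\mathcal{C}$ of sets $S\subseteq F^m$ with either $S$ or $\neg S$ a sub-multipattern, and that $\mathcal{C}$ is closed under Boolean operations. The Boolean closure is a short case check: multipatterns are closed under finite union by definition and sub-multipatterns are closed under taking subsets (hence under intersection with any set), so closure of $\mathcal{C}$ under union, intersection, and complementation follows by a four-way split on which of $S_j,\neg S_j$ ($j=1,2$) is the sub-multipattern. For instance, if $\neg S_1$ and $\neg S_2$ are sub-multipatterns of multipatterns $M_1,M_2$, then $\neg(S_1\cap S_2)=\neg S_1\cup\neg S_2\subseteq M_1\cup M_2$ is a sub-multipattern.

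For a single formula $\phi(P)=\exists X\forall Y(U(P,X)=1\wedge V(P,X,Y)\neq 1)$, I introduce the Diophantine subsets of $F^{m+|X|}$
$$D=\{(P,X):U(P,X)=1\},\qquad T=\{(P,X):\exists Y\,V(P,X,Y)=1\},$$
so that $\phi$ defines the projection $\pi_P(D\cap\neg T)$ onto $F^m$. By Theorem \ref{E-set}, each of $D,T$ is either $F^{m+|X|}$ or a multipattern. The cases $T=F^{m+|X|}$ or $D=\emptyset$ make $\phi$ define $\emptyset$; if $D$ is a proper multipattern then $D\cap\neg T\subseteq D$ is a sub-multipattern of $F^{m+|X|}$; if $D=F^{m+|X|}$ while $T$ is a proper multipattern then $D\cap\neg T=\neg T$ has multipattern complement. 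In every case, $D\cap\neg T$ or its complement in $F^{m+|X|}$ is a sub-multipattern.

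The main obstacle I foresee is transferring this property from $F^{m+|X|}$ to $F^m$ under the projection $\pi_P$, with the degenerate outcomes $\emptyset$ or $F^m$ permitted (since their complements are trivially sub-multipatterns). For a sub-multipattern contained in a parametrized set $\{p_i\circeq w_i(\vec y),\ x_j\circeq v_j(\vec y)\}$ satisfying the piece conditions of Definition \ref{def2}, I would delete the $v_j$-equations along with any auxiliary $y$-variables appearing only in them, treat the surviving system as a tentative parametrization of $P$, and use the cut-equation manipulation from Lemma \ref{le:cut} and the proof of Theorem \ref{E-set} to restore the piece conditions (possibly by subdividing $y$-variables into genuine pieces of the $P$-tuple); if at some stage a coordinate of $P$ becomes unconstrained, the argument reduces to a smaller value of $m$, paralleling the induction in Theorem \ref{E-set}. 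Dually, in the case where the complement is a sub-multipattern, the complement of the projection is $\{P:\{P\}\times F^{|X|}\subseteq T\}$, and I would show this is a sub-multipattern by identifying which parametrized components of $T$ must simultaneously cover the entire $X$-fiber over $P$ and reading off the induced constraints on the $P$-coordinates via the same cut-equation machinery. Combining this with the Boolean closure completes the proof.
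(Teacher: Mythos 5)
Your reduction to Boolean combinations of conjunctive $\exists\forall$-formulas, and your observation that the class ``either $S$ or $\neg S$ is a sub-multipattern'' is closed under Boolean operations, is correct and in fact is the unstated bookkeeping that the paper's proof also relies on. The gap is in your treatment of a single conjunctive $\exists\forall$-formula.

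First, lifting to $F^{m+|X|}$ and then projecting down via $\pi_P$ is not what the paper does and is harder to make work than you acknowledge. A parametrization $(p_j,x_\ell)\circeq w_{tj}(\vec y)$ may have its pieces shared between the $p$-coordinates and the $x$-coordinates; after deleting the $x$-equations and the $y$-variables that occur only there, the surviving $y$-variables need not be pieces of the $P$-tuple, and there is no a priori reason the ``restore the piece conditions'' step succeeds. The paper avoids this entirely by applying Theorem \ref{E-set} directly to the set $\{P:\exists X\, U(P,X)=1\}\subseteq F^m$, treating $X$ as existential variables in the cut equation for $U(P,X)=1$ with $P$ as parameters; this yields a multipattern structure in $F^m$ with no projection step.

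Second, and more seriously, your complement case asks you to show that $\{P:\forall X\,\exists Y\ V(P,X,Y)=1\}$ is a sub-multipattern, and you propose to do this with ``the same cut-equation machinery.'' Cut equations are a tool for analyzing solution sets of \emph{systems of equations}, i.e.\ $\exists$-conditions; they cannot by themselves turn a $\forall\exists$-condition on $P$ into a Diophantine one. This $\forall\exists$-to-$\exists$ conversion is the heart of the theorem. The paper accomplishes it with a genuinely separate and much deeper ingredient: the Lemma asserting that $\theta(P)=\forall X_1\exists Y(U_1(X_1)=1\rightarrow V_1(X_1,Y,P)=1)$ holding in $F$ is equivalent to an $\exists$-formula $\exists Y\,V_1(X_1,Y,P)=1$ holding in each of finitely many NTQ groups $N_1,\dots,N_k$ — this is Merzlyakov-type lifting / formal solutions / the Implicit Function Theorem over free groups (Theorem 12 of \cite{Imp}). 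Once one has this, the condition on $P$ becomes Diophantine and Theorem \ref{E-set} applies. Without invoking this (or an equivalent formal-solution result), your argument has no mechanism to close the complement case, so the proposal as written does not go through.
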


\begin{proof}
 Suppose a set $P$ is defined by the formula (\ref{AE}).
If the
$\exists$-set defined by $\exists X(U(P,X)=1$) is not the whole group $F^m$, then the set $P$ defined by the  formula (\ref{AE}) is a sub-multipattern.

Suppose now that the set defined by $\exists X(U(P,X)=1$ is the whole group, then, as in the proof of Theorem \ref{EE-set}, a subset of parameters satisfying formula (\ref{AE}) is a union of a sub-multipattern and another subset that is defined by
$$\exists X_1\forall Y (U_1(X_1)=1\wedge V_1(X_1,Y,P)\neq 1).$$ Suppose this formula does not define the empty set. Then the negation  is
$$\phi _1(P)=\forall X_1\exists Y (U_1(X_1)=1\rightarrow V_1(X_1,Y,P)=1)$$
and it does not define $F^m$.

\begin{lemma} Formula $$\theta (P)=\forall X_1\exists Y (U_1(X_1)=1\rightarrow V_1(X_1,Y,P)=1)$$  in $F$ in the language $L_A$ is equivalent to the condition that  $\exists$-formula $\exists Y V_1(X_1,Y,P)=1$ holds in each of the finite number of NTQ groups $N_1,\ldots ,N_k$.
\end{lemma}
\begin{proof}
We can assume that the equation $U_1(X_1)=1$ is irreducible. By \cite{Imp} each solution of this equation factors through one of the finite number of $NTQ$-systems. By \cite{Imp}, Theorem 12, formula $\exists Y V_1(P,Y)=1$ holds in one of the corrective normalizing extensions $N_1,\ldots ,N_k$ of one of these NTQ systems. For any value $\bar P$ of variables $P$ that makes each such formula true in each of the NTQ groups $N_1,\ldots ,N_k$, this value also makes $\theta (P)$ true.

\end{proof}
Since $\neg P\neq F^m$, this lemma implies that a formula $\exists Y V_2(P,Y)=1$ holds in $F$, therefore by Theorem \ref{E-set} $\neg P$ must be a multi-pattern.

\end{proof}

\section{Negligible sets}\label{NS}

\begin{definition}\cite{BF}\label{1}
A subset $P$ of $F$ is {\em negligible} if there exists $\epsilon >0$ such that all but finitely many $p\in P$ have a piece such that $$\frac {length (piece)}{length (p)}\geq\epsilon.$$

A complement of a negligible subset is {\em co-negligible}.
\end{definition}

Bestvina and Feighn \cite{BF} stated that in the language without constants every definable subset of $F$ is either negligible or co-negligible.
They also proved
\begin{proposition} \cite{BF}\label{BF3} 1) Subsets of negligible sets are negligible.

2) Finite sets are negligible.

3) A subset $S$ containing a coset of a non-cyclic subgroup $G$ of $F$ cannot be negligible

4) A proper non-cyclic subgroup of $F$ is neither negligible nor co-negligible.

5) The set of primitive elements of $F$ is neither negligible nor co-negligible if $rank (F)>2.$
\end{proposition}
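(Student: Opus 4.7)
The plan addresses the five items in order of increasing depth. Parts (1) and (2) are immediate from Definition \ref{1}: if $P\subseteq Q$ and $Q$ is negligible with parameter $\epsilon$, the same $\epsilon$ witnesses negligibility of $P$, and for finite $P$ the clause ``all but finitely many $p\in P$'' is vacuous. Part (4) reduces to (3), because $H=1\cdot H$ is a coset of itself, while for any $g\notin H$ the coset $gH\subseteq F\setminus H$ is again a coset of the same non-cyclic subgroup; thus (3) applied twice shows that neither $H$ nor its complement is negligible.

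The substantive step is (3). Given a non-cyclic $H\le F$, Nielsen--Schreier supplies $a,b\in H$ generating a free subgroup of rank two, and I would construct a witnessing sequence
\[
h_k = ab\cdot a^2 b^2 \cdots a^k b^k, \qquad w_k = g h_k \in gH.
\]
With $L=\max(|a|,|b|,|g|)$, freeness of $\langle a,b\rangle$ inside $F$ forces $|w_k|=\Theta(k^2)$. A piece of $w_k$ is a subword that reoccurs (possibly inverted); by freeness, any sufficiently long such repeated subword must be contained in a single block $a^i$ or $b^i$, since a coincidence straddling block boundaries would produce a nontrivial relation among $a^{\pm 1},b^{\pm 1}$. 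Single-block pieces have length at most $kL$, so every piece has length $O(k)$ while $|w_k|=\Theta(k^2)$, yielding piece ratios $O(1/k)\to 0$ and contradicting negligibility of any $S\supseteq gH$.

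For (5), assuming $\mathrm{rank}(F)=n>2$, I produce two infinite families of small-piece elements, one of primitives and one of non-primitives. The Nielsen automorphism $x_1\mapsto x_1 w^{-1}$ (fixing $x_2,\dots,x_n$) shows that $x_1\cdot w(x_2,\dots,x_n)$ is primitive for every $w$; substituting the block word of the previous paragraph built on $x_2,x_3$ gives primitives with piece ratio $O(1/k)$, so the primitive set is not negligible. For its complement, the abelianization obstruction refutes primitivity of any element whose exponent vector has $\gcd>1$: for instance $[x_1,x_2]\cdot w_k$, with $w_k$ supported on $x_3,\dots,x_n$ and chosen so that all its abelianized coefficients share a common factor, or $x_1^2\cdot w_k$ chosen so that all abelianized coefficients of $w_k$ are even (which holds along a positive-density subsequence of $k$). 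The extra prefix contributes only bounded-length pieces, so the ratios stay $O(1/k)$. The main technical obstacle throughout is the piece-counting in (3)---ruling out long repeated subwords that straddle block boundaries---which is exactly where freeness of $\langle a,b\rangle$ in $F$ and the strict growth of exponents in $a^i b^i$ enter.
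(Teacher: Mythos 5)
Parts (1)--(4) follow the paper's approach: (1),(2) are immediate from Definition \ref{1}, (4) reduces to (3), and (3) is proved by producing inside the coset an explicit infinite family of quadratic-length words whose pieces are only linear (the paper uses $\{fxyxy^{2}x\cdots xy^{i}x\}$, you use $g\cdot aba^{2}b^{2}\cdots a^{k}b^{k}$; both work). One imprecision in your piece count: a long piece need \emph{not} sit inside a single block. Already for basis letters, $a^{k-1}b^{k-1}$ is a piece of length $2(k-1)$ straddling a block boundary, since it occurs both at stage $k-1$ and inside $a^{k}b^{k}$. The correct observation, which yields the same conclusion, is that a repeated subword cannot contain an \emph{interior} maximal run of $a$'s or $b$'s: the run lengths $1,2,\dots,k$ are distinct, so any interior run pins down the position uniquely. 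Hence every piece has at most two runs and length $O(k)$, and the ratio $O(1/k)\to 0$ survives.

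Your part (5) takes a genuinely different route. You build two explicit small-piece families --- primitives $x_{1}w_{k}(x_{2},x_{3})$ via a Nielsen automorphism, and non-primitives such as $x_{1}^{2}w_{k}$ along the positive-density subsequence of $k$ for which all abelianized exponents of $w_{k}$ are even --- and redo the piece count for each. The paper instead reduces (5) to (3): the set of primitives contains the entire coset $cF(a,b)$, since $c\mapsto cw^{-1}$ (fixing $a,b$ and the rest of the basis) extends to an automorphism for every $w\in F(a,b)$; and the complement contains the non-cyclic subgroup $\langle [a,b],\,c^{-1}[a,b]c\rangle$ of the derived subgroup. Applying (3) to both finishes. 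The paper's reduction is shorter and reuses (3) rather than running a second piece-counting argument; your version is more explicit but leans on the abelianization criterion for non-primitivity and the extra density check to ensure the family is infinite.
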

\begin{proof} Statements 1) and 2) immediately follow from the definition. 3) If $x,y\in G$ and $[x,y]\neq 1$, then
the infinite set $\{fxyxy^{2}x\ldots xy^{i}x, \ i\in{\mathbb N}\}$ is not negligible . Statement 4) follows from 3). 5) Let $a,b,c$ be three elements in the basis of $F$ and denote $F_2=F(a,b)$  The set of primitive elements contains $cF_2,$ and the complement contains $<[a,b], c^{-1}[a,b]c>.$

\end{proof}

\begin{lemma}
A set $P\subseteq F$ that is a sub-multipattern, is negligible.
\end{lemma}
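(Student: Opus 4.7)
The plan is to reduce the statement to the case of a single parametrizable set and then exploit the graphical (cancellation-free) nature of the parametrization. A sub-multipattern is contained in a multipattern, that is, a finite union of sets admitting parametrization. By Proposition \ref{BF3}(1), subsets of negligible sets are negligible, and a finite union of negligible sets is itself negligible (take $\epsilon$ to be the minimum of the finitely many constants and the exceptional set to be the union of the finitely many exceptional sets). The empty set and singletons are handled by Proposition \ref{BF3}(2). It therefore suffices to prove that an infinite set $P \subseteq F$ admitting a parametrization in the sense of Definition \ref{def2} is negligible.

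Fix such a $P$ and focus on the distinguished equation $p \circeq w_1(y_1,\dots,y_n)$, whose variables are guaranteed to yield pieces of $p$. Write
\[
w_1 = u_0\, y_{i_1}^{\varepsilon_1}\, u_1\, y_{i_2}^{\varepsilon_2}\, u_2 \cdots y_{i_N}^{\varepsilon_N}\, u_N,
\]
where $N \geq 1$ is the total number of variable occurrences in $w_1$ and $C := \sum_{j=0}^{N}|u_j|$ is the total length of the constant blocks (if $N=0$ then $P$ is a single constant, contradicting infinitude). Because $\circeq$ denotes graphical equality, each $p \in P$ obtained from parameter values $y_1,\dots,y_n$ satisfies the exact length identity $|p| = C + \sum_{j=1}^{N}|y_{i_j}|$, so by the pigeonhole principle some index $j^\ast$ satisfies $|y_{i_{j^\ast}}| \geq (|p|-C)/N$. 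By the defining hypothesis, the reduced word $y_{i_{j^\ast}}$ is a piece of $p$.

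Setting $\epsilon = 1/(2N)$, whenever $|p| \geq 2C$ the piece $y_{i_{j^\ast}}$ has relative length at least $\epsilon$ inside $p$, and only finitely many $p \in P$ violate $|p| \geq 2C$. Hence $P$ is negligible in the sense of Definition \ref{1}. There is no substantive obstacle; the main point is that the cancellation-free hypothesis of Definition \ref{def2} upgrades the qualitative statement ``each variable-value is a piece'' to the quantitative lower bound supplied by the length identity. Without graphical equality one would only obtain $|p| \leq C + \sum|y_{i_j}|$, and the pigeonhole step would no longer force the existence of a piece of fixed positive relative length.
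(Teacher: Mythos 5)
Your proof is correct and follows the same route as the paper's: reduce to a single set admitting parametrization (via closure of negligibility under subsets and finite unions), then use the graphical equality $p \circeq w_1(y_1,\dots,y_n)$ to obtain an exact length identity, pigeonhole to find a variable occurrence whose value has relative length bounded below, and invoke the defining hypothesis that each variable value is a piece. The paper states this more tersely with $\epsilon = 1/m$ where $m$ is the total length of $w_1$ in variables and constants; your $\epsilon = 1/(2N)$ is the same idea with slightly different constant bookkeeping.
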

\begin{proof}  It is enough to show that a set $P\subseteq F$ that admits parametrization is negligible. Let $m$ be the length of word $w_1$ (as a word in variables $y_i$'s and constants). The set $P$  is negligible with $\epsilon =1/m$.
\end{proof}

\begin{corollary} Every definable subset of $F$ in the language with constants (and, therefore, in the language without constants) is either negligible or co-negligible.\end{corollary}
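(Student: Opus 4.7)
The plan is to assemble this corollary directly from the two immediately preceding results: Theorem \ref{general} (specialized to $m=1$) and the lemma that a sub-multipattern of $F$ is negligible. There is no real obstacle here; the substance of the statement lies entirely in those two preceding results, and all that remains is to thread them together and observe that the ``without constants'' case follows a fortiori.

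Concretely, I would let $P \subseteq F$ be any definable subset in the language $L_A$ with constants, and apply Theorem \ref{general} in the case $m=1$. That theorem delivers the dichotomy that either $P$ itself, or its complement $\neg P = F \setminus P$, is a sub-multipattern. Then I invoke the lemma above stating that every sub-multipattern of $F$ is negligible. In the first case, $P$ is negligible; in the second case, $\neg P$ is negligible, which by Definition \ref{1} is precisely what it means for $P$ to be co-negligible. Either way, one of the two alternatives holds, giving the corollary.

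For the parenthetical ``and, therefore, in the language without constants'' clause, I would simply note that the constant-free language is a sublanguage of $L_A$, so any subset definable without constants is also definable with constants, and hence inherits the dichotomy just established. If desired, this could be stated as a one-line remark at the end of the argument.

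The only mildly subtle point to flag is the restriction $m=1$: Theorem \ref{general} is stated for arbitrary $m$, but the notion of negligibility in Definition \ref{1} and the lemma $\text{sub-multipattern} \Rightarrow \text{negligible}$ are formulated for subsets of $F$ rather than of $F^m$. This is exactly the regime where the corollary is claimed, so the match is perfect and no extra work is required. In summary, the proof proposal is essentially a two-line deduction: apply Theorem \ref{general} with $m=1$, then apply the preceding lemma, and translate ``$\neg P$ negligible'' into ``$P$ co-negligible.''
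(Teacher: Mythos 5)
Your proposal is correct and takes the same route the paper intends: apply Theorem \ref{general} with $m=1$, invoke the lemma that a sub-multipattern of $F$ is negligible, and unpack the definition of co-negligible for the complementary case, with the constant-free case following a fortiori. This is exactly the (implicit) two-line deduction the paper relies on.
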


This and Proposition \ref{BF3} imply Theorem\ref{co1} for $F$ and Theorem \ref{co2}.

\begin{definition}\label{2}
Recall that in complexity theory $T\subseteq F(X)$ is called generic if
$$\rho _n(T)=\frac{|T\cap B_n(X)|}{|B_n|}\rightarrow 1, \ if \ n\rightarrow\infty ,$$
where  $ B_n(X)$ is the ball of radius $n$ in the Cayley graph of $F(X)$. The term ``negligible'' is usually used for a complement of a generic set.
We will call in this paper such a set CT-negligible.
\end{definition}
\begin{proposition} Negligible sets in Definition \ref{1} are CT-negligible.

\end{proposition}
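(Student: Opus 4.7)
The plan is to reduce the claim to a direct counting of reduced words of length $n$ that contain a large repeated subword, and then to show that this count grows much more slowly than the ambient ball. Let $k=\mathrm{rank}(F)\geq 2$ and let $W_n$ denote the set of reduced words of length exactly $n$, so $|W_n|=2k(2k-1)^{n-1}$ for $n\geq 1$ and $|B_n|$ is bounded above and below by constant multiples of $(2k-1)^n$. It is therefore enough to show that $|P\cap W_n|/(2k-1)^n\to 0$, because then summing the resulting geometric series gives the same bound for $|P\cap B_n|/|B_n|$.

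Fix $\epsilon>0$ as in Definition \ref{1}. After discarding the finitely many exceptional words, every $p\in P\cap W_n$ (for $n$ large) contains a non-trivial reduced subword $v$ with $|v|\geq \epsilon n$ that appears in $p$ in at least two different ways (possibly overlapping, possibly one occurrence as $v^{-1}$). Replacing $v$ by a prefix of length $\ell:=\lceil\epsilon n\rceil$, every such $p$ still carries a piece of length exactly $\ell$.

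Next, for each ordered pair of starting positions $i,j\in\{1,\dots,n-\ell+1\}$ and for each ``type'' (matching occurrences of the form $v,v$ or $v,v^{-1}$), I would count the reduced words of length $n$ whose length-$\ell$ subwords at $i$ and $j$ realize the prescribed equality. In the disjoint case this amounts to choosing a reduced content $v$ ($2k(2k-1)^{\ell-1}$ possibilities) and then filling the remaining $n-2\ell$ letters by a reduced string with appropriate boundary conditions, giving $O((2k-1)^{n-\ell})$ words per pair. In the overlapping case the word is forced to be periodic on the overlap, so strictly fewer letters are free and the bound only improves. Summing over the $O(n^{2})$ pairs and the finitely many types yields
\[
|P\cap W_n|\leq C\,n^{2}\,(2k-1)^{n-\ell}=C\,n^{2}\,(2k-1)^{n(1-\epsilon)}.
\]

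Dividing by $|W_n|\asymp(2k-1)^n$ gives $|P\cap W_n|/|W_n|=O\bigl(n^{2}(2k-1)^{-\epsilon n}\bigr)\to 0$, and the same estimate transfers to $|P\cap B_n|/|B_n|$ since $|B_n|$ is dominated by its top geometric term $|W_n|$. Hence $P$ is CT-negligible. The main obstacle is the bookkeeping in the counting step: one has to handle the direct and inverse matches and the overlapping and disjoint cases uniformly, but in each of them fixing $\ell$ matched letters costs a factor $(2k-1)^{\ell}$ against the total count, which easily absorbs the polynomial $O(n^{2})$ combinatorial overhead coming from the choice of positions.
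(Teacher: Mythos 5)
Your proof is correct and takes essentially the same route as the paper: both arguments bound the number of reduced words of length $n$ carrying a piece of length $\lceil\epsilon n\rceil$ by summing, over the $O(n^2)$ ordered pairs of starting positions, a contribution of $O((2k-1)^{n-\epsilon n})$ per pair (treating disjoint and overlapping occurrences separately), and then observe that dividing by $|B_n|\asymp(2k-1)^n$ yields a ratio decaying like $n^2(2k-1)^{-\epsilon n}\to 0$. The only cosmetic differences are that you work with the sphere $W_n$ and then pass to the ball $B_n$ by a geometric-series remark, whereas the paper sums the length-$i$ estimates for $i\le n$ directly using the closed forms for $\sum i z^i$ and $\sum i^2 z^i$.
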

\begin{proof}
Let $2|X|=k$.  Then  $|B_n|=(k/k-2)((k-1)^n-1).$  Fix $\epsilon >0.$ Let $N$ be the set of words $w$ that have a piece
 such that $$\frac {|piece|}{|w|}\geq\epsilon.$$ If $|w|=n$, then $|piece|\geq m=\epsilon n.$ We now count the number
  of reduced words of length $n$ that have a piece of length $m$ (if they have a piece longer than $m$, then they also have a piece of length $m$). There are $\frac {n(n-1)}{2}$ choices for positions of the first letters of the pieces. Suppose these positions  are fixed,  one piece begins at position $i$ and the other at position $j$. Then there are two cases :

 1) If the pieces of length $m$ do not overlap, then up to a constant there are at most $(k-1)^{n-2m}(k-1)^m=(k-1)^{n-m}$ such words;

 2) If the pieces of length $m$  overlap, and $j=i+t$, then up to a constant there are at most $(k-1)^{n-m-t}(k-1)^t=(k-1)^{n-m} $ such words.

 Therefore up to a constant there are at most $n(n-1)(k-1)^{n-m}$ reduced words of length $n$ that have a piece of length $m=\epsilon n$.
  Let $P_{n,\epsilon}$ be the set of reduced words in $B_n$ that have a piece of length $m$, $m=\epsilon n.$
 $|P_{n,\epsilon}|$ is  at most $C\sum _{i=1}^ni(i-1)(k-1)^{i(1-\epsilon)}$, where $C$ is a constant.

 It is known that $$\sum _{i=1}^n iz^i=z\frac {1-(n+1)z^n+nz^{n+1}}{(1-z)^2},$$

 $$\sum _{i=1}^n i^2z^i=z\frac {1+z-(n+1)^2z^n+(2n^2+2n-1)z^{n+1}-n^2z^{n+2}}{(1-z)^3}.$$

 Using these formulas with $z=(k-1)^{(1-\epsilon )}$, we obtain that  $|P_{n,\epsilon}|\leq C_1n^2(k-1)^{(1-\epsilon)n},$ where $C_1$ is a constant.

 Then  $\rho _n\leq \frac{ C_1n^2(k-1)^{(1-\epsilon)n}}{(k-1)^n}=C_1\frac{n^2}{(k-1)^{n\epsilon}}.$  Therefore $\rho _n\rightarrow 0$, as $n\rightarrow\infty.$
\end{proof}

\section{Torsion free hyperbolic groups}\label{hyp}

In this section $G$ is a non-elementary torsion free hyperbolic group.
\begin{theorem}  Let $G$ be a non-elementary torsion free hyperbolic group. Every set definable in $G$ is defined by some boolean combination of
 conjunctive $\exists\forall$-formulas.
\end{theorem}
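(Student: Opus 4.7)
The plan is to mirror the proof of Theorem \ref{prec} step-by-step, replacing each ingredient by its torsion-free hyperbolic analog supplied by \cite{Sela7}. The output shape (a boolean combination of conjunctive $\exists\forall$-formulas in the sense of (\ref{AE})) is identical, so the structural bookkeeping is the same; only the equation-theoretic lemmas need to be upgraded from $F$ to $G$.

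First I would invoke the quantifier-simplification theorem for $G$ from \cite{Sela7}: every first-order formula in the language $L_A$ of $G$ is equivalent, over $G$, to a boolean combination of $\forall\exists$-formulas. This is the direct replacement for Theorem 1 in the excerpt. Second, each atomic $\exists\forall$-formula has to be brought into the form
\[
\exists X\,\forall Y\bigl(V(X,Y,P)=1 \;\to\; U(Y)=1\bigr),
\]
which is the hyperbolic analog of the reduction carried out via Lemma 10 of \cite{Imp}. The combinatorial inputs needed for this step all survive in $G$: by Lemma \ref{onehyp} every finite system of equations in $G$ is equivalent (in $L_A$) to a single equation, and the Gurevich-style argument mentioned in the excerpt collapses every finite disjunction of equations in $G$ to a single equation. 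Third, I would apply the hyperbolic counterpart of Theorem 39 of Section~12.7 in \cite{KMel}, which is exactly what \cite{Sela7} provides: given an $\forall\exists\forall$-formula of shape (\ref{38}) with $G$ in place of $F$, its failure set on a parameter tuple $\bar P$ is described by a conjunction of disjunctions of formulas of the two types (\ref{39}) and (\ref{40}). The first type is already a conjunctive $\exists\forall$-formula in the form (\ref{AE}); the negation of the second type is also of the form (\ref{AE}). Negating once more and taking boolean combinations delivers the required representation of every definable set in $G$.

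The main obstacle is that the machinery behind Theorem 39 of \cite{KMel} was developed over $F$ using NTQ systems and generalized equations; in $G$ the replacement objects are hyperbolic towers (respectively, restricted Makanin--Razborov diagrams for systems of equations over $G$), together with Sela's shortening/test-sequence analysis in the hyperbolic category. What must be checked is that the dichotomy extracted in Section~12.7 of \cite{KMel}---which alternates between finding a witness $X$ making the inner $\forall Y$ clause true, and passing to a family of formal solutions parametrized by a lower-level tower---still produces formulas of the two shapes (\ref{39}) and (\ref{40}) when executed inside a hyperbolic tower. This is precisely the content of \cite{Sela7}: once this decomposition is available, the remainder of the argument is a verbatim repetition of the proof of Theorem \ref{prec}, since neither the form of (\ref{AE}) nor the boolean manipulations used to assemble the final answer rely on anything specific to the free group.
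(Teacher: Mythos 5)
Your high-level strategy agrees with the paper's: invoke Sela's quantifier reduction for $G$ to get down to $\forall\exists$-formulas, and then argue that the resulting sets are governed by conjunctive $\exists\forall$-formulas. The auxiliary combinatorial inputs you invoke (Lemma \ref{onehyp} for collapsing finite systems, the Gurevich-style argument for disjunctions) are exactly the ones the paper uses as well.

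Where you diverge is in the last and crucial step. You assert that \cite{Sela7} supplies a ``hyperbolic counterpart of Theorem 39 of Section 12.7 in \cite{KMel}'' and that the remainder is a ``verbatim repetition'' of the proof of Theorem \ref{prec}. That overstates the parallelism, because the two references use different machinery: Theorem 39 of \cite{KMel} is stated in terms of fundamental sequences, levels $(2,1)$, $(2,2)$, etc., none of which has been set up over a hyperbolic $G$. The paper's actual proof does not import that machinery; it works entirely inside Sela's framework, following Theorem 6.5 of \cite{Sela7}. Concretely, it writes $EAE(p)=T_1(p)\cup\cdots\cup T_d(p)$, treats $T_1(p)$ as in the free group case, reduces $T_2(p)$ via Propositions 3.7/1.34 of \cite{Sela51} to the existence of a test sequence of valid PS statements factoring through some $PSHGHRes$ resolution, observes that the sets $TSPS(p)$ are governed by a finite stratification coming from rigid and solid limit groups (Max-classes), checks that membership in a stratum and conditions (i)--(iv) of a valid PS statement are expressible by conjunctive $\exists\forall$-formulas, and then iterates the sieve for $T_n(p)$, $n>2$, using Theorem 1.33 of \cite{Sela51}. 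So the conjunctive shape emerges from Sela's proof-system stratification, not from re-running the \cite{KMel} decomposition into formulas (\ref{39}) and (\ref{40}). Your plan is plausible if one could first rebuild the full \cite{KMel} apparatus over $G$, but that is a substantial project, not a verbatim transfer; the paper's route avoids it by staying within the machinery that \cite{Sela7} and \cite{Sela51} actually develop.
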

\begin{proof} Similarly to Theorem 6.5 \cite{Sela7} one can prove that $EAE(p)$ (we use Sela's notation) is in the boolean algebra of conjunctive $\exists\forall$ sets. Indeed $EAE(p)=T_1(p)\cup\ldots T_d(p),$  where $d$ is the depth of the tree of stratified sets and $T_n(p)$ is the set of specializations of the defining parameters $P$ for which there exists a valid PS statement for some proof system of depth $n$.  Lemma 6.2 in \cite{Sela7} deals with $T_1(p)$. The proof that $T_1(p)$ is a conjunctive $\exists\forall$ set is identical to the free group case. Theorem 6.3 \cite{Sela7} deals with $T_2(p)$. As in the free group case, Proposition 3.7 \cite{Sela51} (the proof of this proposition is not given there but it is stated that it is identical to the proof of Proposition 1.34 \cite{Sela51}) reduces the analysis of the set $T_2(p)$  to the set of specializations of the defining parameters $P=<p>$ for which there exists a test sequence of valid $PS$ statements that factors through the various resolutions $PSHGHRes$. By construction, if $p_0\in T_2(p)$ then there must exist a valid PS statement of the form
$$(r,(h_1^{2},g_1^{1}),\ldots ,(h_{d(ps)}^{2},g_{d(ps)}^{1}),h_0^1,w_0,p_0,a)$$ that factors through one of the PS resolutions PSHGHRes constructed with respect to all proof systems of depth 2. (Notice that the notion of a resolution corresponds to the notion of a fundamental sequence in our work.)  The sets $TSPS(p)$ associated with various PS resolutions  $PSHGHRes$, i.e. the sets of specializations $p_0$ of defining parameters $P=<p>$ for which there exists a test sequence (test sequence corresponds to a generic family) that factors through any of the PS resolutions $PSHGHRes$ and restricts to valid PS statements are in the Boolean algebra of $\exists\forall$ sets (Proposition 1.34, \cite{Sela51}). Moreover, it follows from the proof of Proosition 1.34 that for any specialization $p_0$ of the defining parameters, there are finitely many combinations for the collections of ungraded resolutions covered by a PS resolution  $PSHGHRes$, and the collections of ungraded resolutions covered by the other (auxiliary) graded resolutions associated with $PSHGHRes$. These finitely many possibilities for the collections of ungraded resolutions form a stratification on the set of specializations of the defining parameters, obtained from the bases of all the graded resolutions that have been constructed. Therefore $p_0\in TSPS(p)$ if and only if it belongs to certain strata in the combined stratification, and not to the complement of these strata, but it depends only on the stratum, not on the particular specialization. A stratum in the stratification is the set of specializations for which there exists a given combination of rigid and strictly solid families of specializations (= Max-classes) of finitely many rigid or solid limit groups (= groups without sufficient splitting). These sets of specializations can be defined by a Boolean combination of conjunctive $\exists\forall$ formulas. By Theorem 1.33 in \cite{Sela51}, if there exists a valid PS statement that factors through  a PS resolution
$PSHGHRes$, then either there exists a test sequence that factors through this resolution and  restricts to valid PS statements, or there must exist a combined specialization that factors through a resolution of lower complexity, and we can continue with this resolution. The definition of a valid PS statement is given in Definition 1.23 \cite{Sela51}. The fact that for a specialization $p_0$ there exists a valid PS statement that factors through  a PS resolution
$PSHGHRes$ can be expressed by a Boolean combination of conjunctive $\exists\forall$ formulas, because conditions (i)-(iv) in this definition can be expressed by such formulas.

Each set $T_n(p)$ is in the boolean algebra of conjunctive $\exists\forall$ sets, by applying the same sieve procedure that is used to analyze the set $T_2(p)$.
\end{proof}

\begin{definition} Let $G$ be  a  torsion-free hyperbolic group generated by a set $A$ and
$\nu: F(A) \to G$ the canonical projection.

\begin{itemize}
\item [1)]  A proper  subset $P$ of $G$ admits parametrization if $P$ is the image under $\nu$ of a set $\tilde P$ in $F(A)$ that admits parametrization in $F(A)$ and there exist constants $\lambda, c $ and $D$ such that for each $p\in P$ there is a pre-image $\tilde p\in\tilde P$ such that the path corresponding to $\tilde p$ in the Cayley graph of $G$ is $(\lambda, c)$-quasigeodesic in $D$-neighborhood of the geodesic path for $p$.

\item [2)]  A finite union of sets admitting parametrization is called a multipattern. A subset of a multipattern is a  sub-multipattern.
\end{itemize}
\end{definition}

A similar definition can be given for sets of tuples of elements of $G$.

\begin{definition} Let $G$ be  a  torsion-free hyperbolic group generated by a set $A$ and
$\nu: F(A) \to G$ the canonical projection.

\begin{itemize}
\item [1)]  A proper  subset $P$ of $G^m$ admits parametrization if $P$ is the image of the set $\tilde P$ in $F^m$ that admits parametrization in $F$ and there exist constants $\lambda, c$ and $D$ such that for each $p=(p_1,\ldots ,p_m)\in P$ there is a pre-image $\tilde p=(\tilde p_1,\ldots ,\tilde p_m)\in\tilde P$ such that the path corresponding to each  $\tilde p_i$ in the Cayley graph of $G$ is $(\lambda , c)$-quasigeodesic in $D$ neighborhood of the geodesic $p_i$.

\item [2)]  A finite union of sets admitting parametrization is called a multipattern. A subset of a multipattern is a  sub-multipattern.
\end{itemize}

\end{definition}

\begin{theorem} \label{generalG}  For every definable subset $P$ of non-elementary torsion free hyperbolic group $G^m$, $P$ or its complement $\neg P$ is a sub-multipattern.
\end{theorem}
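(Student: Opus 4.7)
The plan is to follow the same scaffolding as in the free group proof of Theorem \ref{general}, replacing each ingredient by its torsion-free hyperbolic analog. By the theorem just proved, any definable subset of $G^m$ is a boolean combination of conjunctive $\exists\forall$-formulas
$$\exists X\forall Y\bigl(U(P,X)=1\wedge\neg(V(P,X,Y)=1)\bigr),$$
so it suffices to analyze this shape. As in Theorems \ref{E-set}, \ref{EE-set} and \ref{general}, the argument splits into the Diophantine case, the $\exists$-case, and then the general case via the Sela's resolution/tower machinery for hyperbolic groups in place of the NTQ machinery of \cite{KMel}.

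First I would establish the hyperbolic analog of Theorem \ref{E-set}: if a Diophantine set $P\subseteq G^m$ defined by $\exists Y\, U(Y,P)=1$ is not all of $G^m$, then $P$ is a multipattern in the sense of the definition in Section \ref{hyp}. The lift here is via $\nu:F(A)\to G$: solutions of $U(Y,P)=1$ in $G$ are described, through the cut equation construction of \cite{Imp} (which applies over hyperbolic groups, with relators treated as additional coefficients), by finitely many cut equations, each equipped with a finite set of partitions of the intervals carrying the $p_j$. The combinatorial elimination of non-piece matching variables used in the free group proof goes through verbatim: in any partition where some $y_i$ is not a piece one removes it, reducing the number of partitions until either $p_j$ becomes a free parameter (which, together with the induction on $m$, is impossible since $P\neq G^m$), or each remaining variable in $w_{1j}$ is a piece. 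To convert this into the geometric statement required by the hyperbolic definition, I would invoke standard Strebel/BCP-style local-to-global quasi-geodesicity arguments for torsion-free hyperbolic groups: a concatenation of pieces coming from a cut equation solution labels a path in the Cayley graph that is uniformly $(\lambda,c)$-quasigeodesic in a $D$-neighborhood of the geodesic representative of $p_j$, with $\lambda,c,D$ depending only on $G$ and on the system $U$.

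Next I would adapt Theorem \ref{EE-set} to $G$: for an $\exists$-set $\exists Y(U(Y,P)=1\wedge V(Y,P)\neq 1)$, if the positive part is not all of $G^m$, apply the Diophantine case; otherwise use the Noetherianity of $G$ to collapse the family of equations $V(f(M,P,A),P)=1$ into a single equation, and derive that either $P$ or $\neg P$ is a sub-multipattern. Finally, for the full conjunctive $\exists\forall$-formula I would run Sela's hyperbolic tower analysis (the analog of the NTQ result in \cite{KMel} used in Theorem \ref{general}), so that every solution of the irreducible equation $U_1(X_1)=1$ factors through finitely many hyperbolic NTQ groups $N_1,\ldots,N_k$, and use Sela's version of Theorem 12 of \cite{Imp} (stated in \cite{Sela7}) to reduce the truth of $\forall X_1\exists Y(U_1(X_1)=1\to V_1(X_1,Y,P)=1)$ to the validity of an $\exists$-formula in each $N_i$. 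Applying the Diophantine case to these residual $\exists$-formulas then yields that either $P$ or $\neg P$ is a sub-multipattern.

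The step I expect to be the main obstacle is the geometric bookkeeping in the Diophantine case: ensuring that after the elimination-of-non-pieces procedure, the word obtained in $F(A)$ whose $\nu$-image equals $p_j$ really does trace a uniformly quasi-geodesic path near the geodesic for $p_j$ in the Cayley graph of $G$. In the free group this is free because substitution is graphical; over $G$ one must control cancellation modulo the relators, and this is where the constants $\lambda,c,D$ in the hyperbolic definition of parametrization are produced. I would handle this using the standard fact that in a torsion-free hyperbolic group, locally $(\lambda,c)$-quasigeodesic paths are globally quasigeodesic, together with the Rips-Sela analysis of reductions of systems of equations over $G$ that already underlies both \cite{Imp} and \cite{Sela7}. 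Everything else is then a formal transcription of the proofs of Theorems \ref{E-set}, \ref{EE-set} and \ref{general}.
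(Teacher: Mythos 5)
Your high-level scaffolding (Diophantine case, then $\exists$-case via Noetherianity, then the general conjunctive $\exists\forall$-formula handled by Sela's formal-solution/covering-closure theorem from \cite{Sela7} to pass from $\forall X_1\exists Y$ to a plain $\exists$-formula whose failure set is a multipattern) matches the paper. However, in the Diophantine case you have the key reduction assembled in the wrong order, and the step you flag as ``the main obstacle'' is precisely the one the paper eliminates by a different choice of machinery. You propose to run the cut-equation construction of \cite{Imp} directly over $G$ (``with relators treated as additional coefficients'') and then retrofit quasi-geodesicity by a Strebel/BCP-style local-to-global argument. This does not work as stated: the cut-equation machinery is a free-group tool and treating relators as coefficients produces a free-group problem whose solutions need not correspond to solutions over $G$, and there is no reason for a concatenation of pieces produced by a generalized-equation process over $G$ to be uniformly quasi-geodesic, since the items could backtrack heavily in the Cayley graph of $G$.

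The paper's route is to invoke the Rips--Sela canonical representatives theorem (\cite{RS95}, recorded here as Lemma \ref{Lem:RipsSela1}) \emph{first}: a system $S(Z,A)=1$ over $G$ is replaced by finitely many systems $S_i(X_i,A)=1$ over $F(A)$ whose solutions map onto the $G$-solutions, and, crucially, the ``Moreover'' clause of that lemma already supplies the constants $\lambda$, $c$, $D$ so that the canonical-representative words are $(\lambda,c)$-quasigeodesics in a $D$-neighborhood of the corresponding geodesics. Only then does one apply the already-proved free-group Theorem \ref{E-set} to the systems $S_i$ over $F(A)$. This ordering is what makes the hyperbolic definition of ``admits parametrization'' (image of an $F(A)$-multipattern under $\nu$ with a uniformly quasi-geodesic pre-image) attainable, since the quasi-geodesic control is an output of the Rips--Sela reduction rather than something to be re-derived locally. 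You mention Rips--Sela at the end only as auxiliary support for the geometric bookkeeping; it should instead be the central reduction mechanism, with the free-group cut-equation analysis applied downstream of it. With that repositioning your outline aligns with the paper's proof.
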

\begin{proof} We will first show the set defined by the positive existential formula
$$\exists Y U(P,Y,A)=1,$$ where elements in $A$ are constants,
is a  sub-multipattern if it is not the whole group.

In \cite{RS95}, the problem of deciding whether or not a system of equations $S$ over a torsion-free hyperbolic group $G$ has a solution was solved by constructing quasigeodesics called
\emph{canonical representatives} for certain elements of $G$. This construction reduced the problem to deciding the existence of solutions in finitely many
systems of equations over free groups.  The reduction may also be used to describe parameters $P$ as shown
below.

\begin{lemma}\label{Lem:RipsSela1}\cite{RS95}
Let $G=<A|R>$ be a torsion-free $\delta$-hyperbolic group and $\pi : F(A)\rightarrow G$ the canonical epimorphism.  There
is an algorithm that, given a system $S(Z,A)=1$
of equations over $\Gamma$, produces finitely many systems of
equations
\begin{equation}
S_{1} (X_{1},A)=1,\ldots,S_{n}(X_{n},A)=1
\end{equation}
over $F$ and homomorphisms $\rho_{i}: F(Z,A)\rightarrow F_{R(S_{i})}$ for $i=1,\ldots,n$
such that
\begin{enumerate}
\item for every $F$-homomorphism $\phi : F_{R(S_{i})}\rightarrow F$,  the induced map $\overline{\rho_{i}\phi\pi}:\Gamma_{R(S)}\rightarrow \Gamma$ is a $\Gamma$-homomorphism, and
\item for every $\Gamma$-homomorphism $\psi: \Gamma_{R(S)}\rightarrow \Gamma$ there is an integer $i$ and an $F$-homomorphism
$\phi : F_{R(S_{i})}\rightarrow F(A)$ such that $\overline{\rho_{i}\phi\pi}=\psi$.
\end{enumerate}

Moreover, the algorithm also gives positive numbers $\lambda, c$ and $D$ such that for each solution $\psi (Z)$ of $S(Z,A)=1$ the corresponding words
$\phi (\rho_{i} (Z))$ in $F(A)$ represent  $(\lambda , c)$-quasigeodesics in the $D$ neighborhood of corresponding elements in $\psi (Z)$.

\end{lemma}

By this lemma the set of parameters $P$ defined by the formula $\exists Y U(P,Y,A)=1$ consists of the images of the set of parameters satisfying the formula  $\exists Y W(Y,P,A)=1$ for certain system of equations $\exists Y W(Y,P,A)=1$ is $F(A).$ This proves that this set is a sub-multipattern.

Now the proof of the theorem is the same as in the free group case.  Suppose a set $P$ is defined by the formula (\ref{AE}).
If the
$\exists$-set defined by $\exists X(U(P,X)=1$) is not the whole group $G^m$, then the set $P$ defined by the  formula (\ref{AE}) is a sub-multipattern.

Suppose now that the set defined by $\exists X(U(P,X)=1$ is the whole group, then, as in the proof of Theorem \ref{general}, a subset of parameters satisfying formula (\ref{AE}) is a union of a sub-multipattern and another subset that is defined by
$$\exists X_1\forall Y (U_1(X_1)=1\wedge V_1(X_1,Y,P)\neq 1).$$ Suppose this formula does not define the empty set. Then the negation  is
$$\phi _1(P)=\forall X_1\exists Y (U_1(X_1)=1\rightarrow V_1(X_1,Y,P)=1)$$
and it does not define $G^m$.

\begin{lemma} (\cite{Sela7}, Theorem 2.3) Formula $$\theta (P)=\forall X_1\exists Y (U_1(X_1)=1\rightarrow V_1(X_1,Y,P)=1)$$  in $G$ in the language $L_A$ is equivalent to the condition that  there exists a formal solution $Y$ of the system $V_1(X_1,Y,P)=1$  in the covering closure (which corresponds to a finite number of NTQ groups $N_1,\ldots ,N_k$).
\end{lemma}

Since $\neg P\neq G^m$, this lemma implies that a formula $\exists Y V_2(P,Y)=1$ holds in $G$, therefore  $\neg P$ must be a multi-pattern.
\end{proof}
\begin{theorem} Proper non-cyclic subgroups in a non-elementary torsion free hyperbolic group $G$ are not definable.
\end{theorem}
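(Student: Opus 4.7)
The plan is to argue by contradiction along the exact lines of the free-group case, using Theorem \ref{generalG} to reduce everything to showing that neither a proper non-cyclic subgroup $H \leq G$ nor its complement can be a sub-multipattern. Assume $H$ is a definable proper non-cyclic subgroup; then by Theorem \ref{generalG} one of $H$, $\neg H = G \setminus H$ must be a sub-multipattern, and I will rule out both possibilities by introducing a hyperbolic analogue of Definition \ref{1} and exhibiting explicit non-negligible sequences.

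First I would set up the analogue of negligibility: call $S \subseteq G$ negligible if there exist constants $\epsilon, \lambda, c, D > 0$ such that all but finitely many $g \in S$ admit a $(\lambda,c)$-quasigeodesic preimage $\tilde g \in F(A)$ in a $D$-neighbourhood of some geodesic for $g$, in which $\tilde g$ has a piece (in the sense of Section 3) of relative length at least $\epsilon$. By the very form of the definition of parametrization in $G$, every sub-multipattern is negligible with $\epsilon = 1/m$, where $m$ is the length of the parametrizing word $w_1$ as a word in the $y_i$'s and constants; this is the direct hyperbolic analogue of the lemma in Section \ref{NS}.

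Second, I would show $H$ is not negligible. Since $G$ is torsion-free hyperbolic, every abelian subgroup is cyclic, so the non-cyclic $H$ is non-abelian; replacing two non-commuting elements by sufficiently high powers (the standard ping-pong argument in a hyperbolic group) yields $x,y \in H$ generating a quasi-convex free subgroup of rank two, quasi-isometrically embedded in $G$. The sequence $g_i = xyxy^2x \cdots xy^i x \in H$ mimics the one used by Bestvina--Feighn for $F$. If, for infinitely many $i$, the canonical $F(A)$-representative of $g_i$ contained a piece of linear relative length, then the quasi-isometric embedding of $\langle x,y\rangle$ together with the Morse lemma in $G$ would force a corresponding piece of linear length in the $F_2$-word for $g_i$ itself, contradicting Proposition \ref{BF3}(3) applied inside $F_2$. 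The same sort of argument handles $\neg H$: fixing $g_0 \in G \setminus H$, the sequence $g_0 g_i \in \neg H$ has $F(A)$-representatives whose large pieces would, after discarding a bounded-length prefix reflecting $g_0$, descend to large pieces of $g_i$, which is again impossible.

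The only genuinely new ingredient beyond the free-group case is the piece-transfer argument above: one must compare a $(\lambda,c)$-quasigeodesic $F(A)$-lift with the geodesic word inside the quasi-convexly embedded $F_2$ and show that a repeated subword of linear length in the former forces, up to bounded error, a repeated subword of linear length in the latter. This is where I expect the main work to lie, though it is essentially routine $\delta$-hyperbolic bookkeeping: quasi-convexity of $\langle x,y\rangle$ prevents the bounded-error terms from accumulating, and two approximately equal subwords of the $F(A)$-lift fellow-travel two subwords of the $F_2$-geodesic that must then agree letter-for-letter on all but a bounded portion. Once this transfer step is secured, the contradiction with Theorem \ref{generalG} is immediate and the theorem follows.
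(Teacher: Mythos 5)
Your overall architecture matches the paper's: argue by contradiction via Theorem \ref{generalG}, choose a free rank-$2$ subgroup $\langle x,y\rangle\leq H$ (with $x,y$ high powers of non-commuting $a,b\in H$), take the Bestvina--Feighn sequence $P_i=xyxy^2x\cdots xy^ix$, and deduce from the hypothesized sub-multipattern structure that the quasigeodesic $F(A)$-lifts $Q_i$ carry pieces of linear relative length, which you then wish to rule out. Your explicit treatment of $\neg H$ via $g_0P_i$ is in fact more complete than the paper's writeup, which only carries out the $H$-side.

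The genuine gap is in the step you dismiss as ``routine $\delta$-hyperbolic bookkeeping.'' Suppose $R$ and $\bar R$ are the two occurrences of the piece in $Q_i$ and $u_1,u_2$ the subpaths of $P_i$ that fellow-travel them. Unwinding the Morse lemma gives only a relation $u_1 = t^{-1}u_2t'$ in $G$ with $|t|,|t'|\leq 4D$, where $t,t'$ are connecting segments \emph{between} $Q_i$ and $P_i$; these elements need not lie in $\langle x,y\rangle$, so quasi-convexity of $\langle x,y\rangle$ does not let you cancel them, and it does not follow that $u_1,u_2$ ``agree letter-for-letter on all but a bounded portion'' as $F_2$-words. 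The paper closes this gap by a non-routine argument: the $y$-blocks $b^{mj}$ and $x$-separators $a^n$ inside $u_1$ must be carried to corresponding blocks inside $u_2$, so $t$ conjugates a power of $a$ to a power of $a$ and a power of $b$ to a power of $b$; since centralizers of loxodromics in a torsion-free hyperbolic group are cyclic and $a,b$ do not commute, this forces $t=1$, and only then do $u_1,u_2$ coincide and contradict the shape of $P_i$. Moreover the paper needs a separate case (absent from your outline) where $R$ and $\bar R$ overlap heavily, making $Q_i$ approximately periodic with small period $q_{ij}$; there the connecting elements vary along the period, and one must apply a pigeonhole count (only $|2A|^{2D+|a|+|b|}$ candidate connectors) to extract a repeating connector $t$ and conclude that a linearly-long subword of $P_i$ is a product of commuting conjugates $t^{-1}q_{ij}^rt$ --- impossible for the word $xyxy^2x\cdots xy^ix$. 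You should supply both the periodic/non-periodic case split and the centralizer argument; citing quasi-convexity and Proposition \ref{BF3}(3) does not by itself bridge the $F(A)$-lift to the $F_2$-word.
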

\begin{proof} We will prove the theorem by contradiction.
Let $H$ be a definable non-cyclic subgroup in $G$. Let $a,b$ be two non-commuting elements in $H$ such that they generate a free subgroup, we can assume that $b$ is cyclically minimal. Such elements exist by \cite{Arzh}, Lemma 1.14. Let $x=a^n, y=b^m$ where $n,m$ are sufficiently large numbers so that the set of elements  $\{P_i=xyxy^{2}x\ldots xy^{i}x, \ i\in{\mathbb N}\}$ consists of $(\lambda _1, c_1)$-quasigeodesics in the $D_1$-neighborhood of corresponding geodesics. Such numbers $n,m$ exist by \cite{Arzh}, Lemma 2.3.
We can assume that $\lambda _1=\lambda, c=c_1$ and $D_1=D$.

Elements $\{P_{i}\}$ can be represented by  quasigeodesics $Q_{i}$ that have pieces $R_i=\bar R_i$ of length  greater than $\epsilon |Q_{i}|$.
 We have one of the following two cases.

 1) There is a number $\delta$ and a subsequence of indices $\{i_j\}$ such that the non-overlapping part of  $R_{i_j}$ and $\bar R_{i_j}$ in $Q_{i_j}$ is greater than $\delta |Q_{i}|$.

 2) There is a subsequence of indices $\{i_j\}$ such that  $Q_{i_j}$  have
periodic subwords of length greater than $\epsilon |Q_{i_j}|$, with period $q_{ij}$ such that $|q_{ij}|/|Q_{ij}|\rightarrow 0$.

In the second case, since quasigeodesics $P_{i_j}$ and $Q_{i_j}$  are $2D$ close to each other, such  long periodic subwords must appear in the elements $P_{i_j}$. Indeed, the number of different geodesics joining phase vertices of the subpath labeled by $q_{ij}^k$ of the path labeled by $Q_{ij}$ to the nearest vertices of the path labeled by $P_{ij}$ in the generators $a$ and $b$ is bounded by $|2A|^{2D+|a|+|b|}.$ Therefore, the same geodesic (labeled, say, by $t$) keeps repeating, and  a relatively large part of $P_{ij}$ (greater than $\epsilon |P_{ij}|$) is a product of commuting elements, each of them being equal  to $t^{-1}q_{ij}^rt$ for some $r$.  This contradicts to the form of the elements $P_i$.

In the first case,  since quasigeodesics $P_{i_j}$ and $Q_{i_j}$  are $2D$ close to each other, two  pieces in $P_{i_j}$ of length greater than $\epsilon |P_{i_j}|$ are $4D$ close to each other. Then, similarly to the previous case, we can show that there exists an element $t$ conjugating a power of $a$ to a power of $a$ and a power of $b$ to a power of $b$, and, therefore, commuting with both $a$ and $b$. Therefore, $t=1$ and two pieces in $P_{ij}$ of length greater than $\epsilon _1|P_{i_j}|$ for some $0\leq \epsilon _1\leq \epsilon$ coincide. This again contradicts to the form of the elements $P_{i_j}$. The theorem is proved.
\end{proof}

We complete this section with the following lemma which shows that a finite system of equations in $G$ is equivalent to one equation.
\begin{lemma} \label{onehyp} Let $G$ be a  torsion-free $\delta$-hyperbolic group. Then

1) There exists a constant $n=n(G,\delta )$ such that  equation $x^ny^nz^n=1$ implies $[x,y]=[x,z]=[y,z]=1.$

2) A finite system of equations in $G$ is equivalent to one equation.
\end{lemma}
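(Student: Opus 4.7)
The plan for part (1) is geometric. Since $G$ is torsion-free hyperbolic, every non-trivial element $g$ acts loxodromically on the Cayley graph with a quasi-axis along which its powers translate. The relation $x^n y^n z^n = 1$ says that the three quasi-geodesic paths labelled by $x^n$, $y^n$, $z^n$ close up into a triangle; replacing each side by its geodesic representative gives a geodesic triangle in which each side shadows a long segment of an axis of $x$, $y$, $z$ respectively. By the $\delta$-thin triangles condition, for $n$ larger than a threshold depending only on $\delta$ and the translation lengths at play, two of these axes must fellow-travel over a distance proportional to $n$. In a torsion-free $\delta$-hyperbolic group, fellow-travelling beyond a threshold depending only on $\delta$ forces two hyperbolic elements to share a common power, hence to lie in a common maximal cyclic subgroup, and so to commute. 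Applying this to each pair of sides yields $[x,y]=[y,z]=[x,z]=1$. This is essentially Delzant's theorem; the degenerate cases (some of $x, y, z$ of small stable length, or one of them trivial) are handled directly from torsion-freeness.

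For part (2), my plan is to reduce a finite system to a single equation by induction on its size, so it suffices to combine the pair $w_1 = 1 \wedge w_2 = 1$ into one equation. Fix $n$ as in (1) and choose constants $a, b, c, a', b', c' \in G$ in sufficiently general position (for instance, taken from a non-abelian free subgroup whose maximal cyclic subgroups in $G$ intersect trivially). The target is a single equation of the shape
\[
\bigl( (w_1 a)^n (w_2 b)^n c^n \bigr)\cdot\bigl( (w_1 a')^n (w_2 b')^n (c')^n \bigr) = 1,
\]
built so that both bracketed factors vanish when $w_1 = w_2 = 1$, while in the converse direction part (1) forces $w_1 a, w_2 b, c$ and also $w_1 a', w_2 b', c'$ to be pairwise commuting, hence to lie in common maximal cyclic subgroups; the general-position choice of the constants makes these two cyclic subgroups intersect trivially, leaving only $w_1 = w_2 = 1$.

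The main obstacle is in part (2): a single equation of the form $(w_1 a)^n (w_2 b)^n c^n = 1$ only forces, by (1), that $w_1 a$, $w_2 b$, $c$ lie in a common cyclic subgroup with product $1$, which admits a one-parameter family of solutions rather than the single solution $w_1 = w_2 = 1$. Pinning $w_1, w_2$ down to the identity therefore requires either combining two such equations into one, as sketched above, or adding an ambient root-uniqueness constraint (which is available in torsion-free hyperbolic groups). For part (1), the principal technical point is the effective fellow-travelling bound that converts "three quasi-axes close up into a triangle" into "two axes stay within $O(\delta)$ of each other for a long distance", together with the structural fact that centralizers of non-trivial elements in a torsion-free hyperbolic group are cyclic.
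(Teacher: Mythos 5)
Your geometric sketch for part (1) is a reasonable route (thin triangles force two quasi-axes to fellow-travel, and long fellow-travelling in a torsion-free hyperbolic group forces commutation). The paper does not re-prove this but simply cites Gromov 5.3B, Ol'shanskii's Corollary~6 and Chaynikov's Theorem~1.4, with the observation that translational discreteness makes the threshold depend only on $G$ and $\delta$. So your part (1) is an acceptable alternative in spirit, though it leaves the effective bound unquantified.

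Part (2) has a genuine gap, in both directions. First, the \emph{forward} direction fails: if $a,b,c$ are in ``general position'' (e.g.\ taken from a non-abelian free subgroup), then $a^n b^n c^n \neq 1$, so setting $w_1=w_2=1$ does \emph{not} make $(w_1a)^n(w_2b)^n c^n$ vanish; your displayed equation is simply not satisfied by the intended solution, and no choice of generic constants can repair this since part (1) would force $a,b,c$ to commute. Second, even granting the forward direction, the \emph{backward} direction does not follow: from $U\cdot V=1$ you cannot conclude $U=1$ and $V=1$ separately, so you cannot invoke part (1) on each bracket --- the equation only says $U=V^{-1}$, which is compatible with both being nontrivial. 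The paper avoids both problems with an equation of a different shape, $(x^na)^na^{-n}=\bigl((yb)^nb^{-n}\bigr)^n$ with $a,b$ a fixed non-commuting pair: each side is manifestly trivial at $x=y=1$, and rearranging it as $(x^na)^n(a^{-1})^n\bigl((yb)^nb^{-n}\bigr)^{-n}=1$ allows a direct application of part (1), after which transitivity of commutation and torsion-freeness pin $x=y=1$ by deriving $[a,b]=1$ otherwise. The essential idea you are missing is to build the combined equation so that both sides collapse to $1$ at the trivial solution and so that part (1) applies to the equation \emph{as a whole} (not to each factor of a product), using the fixed non-commuting constants to exclude every other solution.
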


\begin{proof} The first statement  follows from \cite{G}, 5.3B and \cite{Arzh}, Corollary 6. Indeed, one has to take $N$ to be the maximum of $100\delta$  and the numbers determined as in \cite{Arzh}, Corollary 6 for all triples of elements in $G$ of length not more than $100\delta$.
It can be also obtained from the proof of Theorem 1.4,  \cite{chai}. The theorem, in particular, states that in a non-elementary hyperbolic group, for any finite set of elements $x_1,\ldots ,x_k$ there
exists an integer $N$ such that the normal closure of $x_1^N,\ldots ,x_k^N$ is free. The number $N$ in the proof depends only on the number of elements $k$ and the minimum of their translation lengths. Since hyperbolic groups are translationally discrete, and $k=3$, this number depends only on the group and $\delta$. 

To prove the second statement we fix $n>N$, and elements $a,b\in G$ that do not commute, and show that the equation $(x^na)^na^{-n}=((yb)^nb^{-n})^n$ has only the trivial solution $x=1$ and $y=1$ in $G$.

Suppose, by contradiction, that $x\neq 1$. By the first statement of the lemma, $[x^na,a]=1$. Hence $[x^n,a]=1$. By transitivity of commutation (here we use $x\neq 1$) we have $[x,a]=1$. Therefore, we can rewrite the equation in the form $(x^n)^n=((yb)^nb^{-n})^n,$ which implies that
$[x^n,((yb)^nb^{-n})]=1$, and hence (since $G$ is torsion free), that $x^n=(yb)^nb^{-n}.$ Again, it follows that $[y,b]=1$, $[x,y]=1$ and $x^n=y^n.$ This implies that $x=y$ and, by transitivity of commutation, $[a,b]=1$, which contradicts the choice of $a,b.$ The contradiction shows that $x=1$. In this event the initial equation transforms into $(y^nb)^nb^{-n}=1$, which implies $[y,b]=1$ and $y=1$, as desired.

\end{proof}
\section{ Appendix: groups with no sufficient splitting and conjunctive $\exists\forall$-formulas}
To make this paper more self-contained we will give more explanations  how formulas (\ref{39}) and (\ref{40}) are obtained in \cite{KMel}.
In Section 5.4 of \cite{KMel} we defined the notion of a sufficient splitting of a group $K$ modulo a class of subgroups ${\mathcal K}$.
Let $F$ be a free group with basis $A$, $P=A\cup \{p\}$, $H=<p>\ast F.$ Let ${\mathcal K}$ consist of one subgroup ${\mathcal K}=\{H\}.$
The set of specializations $p$ such that  formula (\ref{38})
 is true in $F$ is associated with  a finite number of groups without sufficient splitting modulo $H$ and for each such group $K$ with a given combination of Max-classes of algebraic solutions. The total number of such classes is bounded by Theorem 11 in \cite{KMel}.
 Let $H\leq K$ and $K=<X, P|S(X,P)>$ does not have a sufficient splitting modulo $H$. Let $D$ be an abelian JSJ decomposition of $K$ modulo $H$.

We recall the notion of algebraic solutions. Let $K_1$  be a fully residually free quotient of the group $K$, $\kappa: K \rightarrow
K_1$ the canonical epimorphism, and $H_1 = H^\kappa$ the canonical
image of $H$ in $K_1$.
An elementary abelian splitting of $K_1$ modulo $H_1$ which does not
lift into  $K$ is called a {\em new} splitting.
\begin{definition}
\label{de:reducing} (Definition 20 \cite{KMel}) In the notation above the quotient $K_1$ is
called {\em reducing} if one of the following holds:
\begin{enumerate}
\item $K_1$ has  a non-trivial free decomposition modulo $H_1$;
\item $K_1$ has a new elementary abelian splitting modulo $H_1$.

\end{enumerate}
\end{definition}

 We
say that a homomorphism $\phi :K\rightarrow K_1$ is {\em special}
if $\phi $ either maps an edge group of $D$ to the identity  or
maps
 a non-abelian vertex group of $D$ to an abelian subgroup.

Let ${\mathcal R}=\{K/R(r_1), \ldots, K/R(r_s)\}$ be a complete
reducing system for $K$ (see \cite{KMel}).
Now we define algebraic and reducing solutions of $S = 1$ in $F$
with respect to $\mathcal{R}$. Let $\phi :H \rightarrow F$ be a
fixed $F$-homomorphism and $Sol_\phi$ the set of all homomorphisms
from $K$ onto $F$ which extend $\phi$.
 A solution $\psi \in Sol_\phi$ is called {\em
reducing}
  if there exists a solution  $\psi^\prime \in Sol_\phi$  in the $\sim_{MAX}$-equivalence class
  of $\psi$ which  satisfies one of the equations $r_1= 1, \ldots, r_k = 1$ (more precise, which factors through the corrective extension of one of the groups $K/R(r_i)$).
All non-reducing non-special solutions from $Sol_\phi$ are called
{\em $K$-algebraic} (modulo $H$ and $\phi$).

\begin{theorem}Let $H\leq K$ be as above. The fact that for parameters $P$ there are exactly $N$ non-equivalent Max-classes of $K$-algebraic solutions
of the equation $S(X,P)=1$ modulo $H$ can be written as a boolean combination of conjunctive $\exists\forall$-formulas.
\end{theorem}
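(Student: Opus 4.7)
The plan is to write the condition as one explicit conjunctive $\exists\forall$-formula for each lower bound $k$ on the number of Max-classes, and then to express exactness of $N$ as $\Phi_N(P) \wedge \neg \Phi_{N+1}(P)$, where
$$\Phi_k(P) \;:=\; \exists X_1,\ldots,X_k \Bigl(\bigwedge_{i=1}^k \mathrm{Alg}(X_i,P) \wedge \bigwedge_{1\le i<j\le k} \neg (X_i \sim_{MAX} X_j)\Bigr)$$
asserts the existence of at least $k$ pairwise $\sim_{MAX}$-inequivalent $K$-algebraic solutions. It then suffices to show that each $\Phi_k(P)$ is equivalent to a formula in the form (\ref{AE}).

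The first step is to unfold the constituents. The predicate $\mathrm{Alg}(X,P)$ is the conjunction of (i) $S(X,P)=1$; (ii) non-speciality, i.e., none of the finitely many edge-group generators of the abelian JSJ decomposition $D$ is killed, and for each non-abelian vertex group at least one pair of generators has non-commuting image; and (iii) non-reducing, which is the negation of the existential statement ``there exist Max-equivalence data $\bar Z$ transforming $X$ to $X'$ and factoring data $\bar U$ realising $X'$ through a corrective extension of some $K/R(r_j)$''. Since in $F$ (and in $G$ by Lemma \ref{onehyp}) a finite system of equations reduces to one equation and a finite disjunction of equations reduces to one equation, (ii) collapses to a single inequation $T(X,P) \neq 1$, so (i)+(ii) collapses to $S(X,P)=1 \wedge T(X,P) \neq 1$; and the existential statement inside (iii) collapses to $\exists \bar W\, R(X,P,\bar W)=1$. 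Similarly, using the finite Dehn-twist generating set of the modular automorphism group of $D$ modulo edge-group stabilizers, the relation $X \sim_{MAX} X'$ is existentialised as $\exists \bar V\, E(X,X',\bar V)=1$ for an explicit word $E$.

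Substituting these descriptions into $\Phi_k(P)$, introducing disjoint witness tuples $\bar W_i$ for each non-reducing clause and $\bar V_{ij}$ for each non-equivalence clause, and pulling the resulting $\forall$-blocks to the front as a single $\forall \bar Y$ yields
$$\Phi_k(P) \equiv \exists X_1,\ldots,X_k\, \forall \bar Y\, \bigl(\Sigma(\bar X,P)=1 \;\wedge\; \Delta(\bar X,P,\bar Y)\neq 1\bigr),$$
where $\Sigma$ is the single equation equivalent to $\bigwedge_i S(X_i,P)=1$ and $\Delta$ is the single equation equivalent to the disjunction of all the $T(X_i,P)=1$, $R(X_i,P,\bar W_i)=1$, and $E(X_i,X_j,\bar V_{ij})=1$. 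This puts $\Phi_k(P)$ in the form (\ref{AE}), and the exact-$N$ condition is the boolean combination $\Phi_N(P) \wedge \neg \Phi_{N+1}(P)$.

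The main obstacle is the technical content behind the two existentialisations above. One needs that Max-equivalence is a genuinely first-order existential relation, which follows from the Dehn-twist presentation of the modular group modulo edge-group stabilizers provided by $D$; and one needs that ``$X$ factors through a corrective normalizing extension of $K/R(r_j)$'' is first-order existential over the explicit NTQ presentation of that extension. Both facts are recorded in the constructions of \cite{KMel} (Section 5) and \cite{Imp}; once they are in hand, the rest is the purely formal quantifier manipulation sketched above.
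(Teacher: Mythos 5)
Your proposal is correct and follows essentially the same route as the paper: write a conjunctive $\exists\forall$-formula asserting ``at least $k$ pairwise $\sim_{MAX}$-inequivalent algebraic solutions'' --- with Max-equivalence existentialised via the AE-transformation (Dehn twist) multipliers and the reducing condition expressed as a universal clause --- and then obtain ``exactly $N$'' as $\theta_N\wedge\neg\theta_{N+1}$. The paper's $\theta_k$ writes everything inline with explicit twist variables $T$ (the $t_i$ required to commute with $c_i(X_2)$, the substitution $Z=X_2^{\sigma_T}$, and $V$ a disjunction defining corrective extensions of the maximal reducing quotients), whereas you package $\mathrm{Alg}$ and $\sim_{MAX}$ as named sub-formulas and then prenex; this is the same manipulation presented more modularly.
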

The generating set
$X$ of $K$ corresponding to the decomposition $D$ can be partition
 as $X=X_1\cup X_2$ such that $G=<X_2\cup P>$ is the  fundamental group of the graph of groups obtained from $D$ by removing all QH-subgroups.
If $c_e$ is a given generator of an edge group of $D$, then we know how AE-transformation $\sigma$ associated with edge $e$ acts on the generators from the set $X$. Namely, if $x\in X$ is a generator of a vertex group, then either $x^{\sigma}=x$ or $x^{\sigma}=c^{-m}xc^{m},$ where $c$ is a root of the image of $c_e$ in $F$, or in case $e$ is an edge between abelian and rigid vertex groups and $x$ belongs to the abelian vertex group, $x^{\sigma}=xc^{m}$. Similarly, if $x$ is a stable letter then either $x^{\sigma}=x$ or $x^{\sigma}=xc^{m}.$

 One can write elements $c_e$ as words in generators $X_2$, $c_e=c_e(X_2).$ Denote $T=\{t_i,\ i=1,\ldots ,m\}.$ Consider a formula
\begin{multline*}
\exists X_1 \exists X_2\forall Y \forall T\forall Z \left(
S(X_1,X_2,P)=1\right.\\  \wedge \neg \left(\left.\bigwedge_{i=
1}^{m}[t_i,c_i(X_2)]=1 \wedge Z=X_2^{\sigma_T}\wedge
S(Y,X_2,P)=1\wedge V(Y,Z,P)=1\right)\right).
\end{multline*}
It says that there exists a solution of the equation $S(X_1,X_2,P)=1$ that is not Max-equivalent to a solution $Y,Z,P$ that
satisfies $V(Y,Z,P)=1$.  If now $V(Y,Z,P)=1$ is a disjunction of equations defining (corrective extensions of) maximal reducing quotients, then this formula states that for
parameters $P$ there exists at least one Max-class of algebraic solutions of $S(X,P)=1$ with respect to $H$.

Denote $\tau(T,X_2,Y,Z)=\left(\bigwedge_{i=
1}^{m}[t_i,c_i(X_2)]=1 \wedge Z=X_2^{\sigma_T}\wedge
S(Y,X_2,P)=1\wedge V(Y,Z,P)=1\right)$. The following formula states  that for
parameters $P$ there exists at least two non-equivalent Max-classes of algebraic solutions of $S(X,P)=1$ with respect to $H$.
\begin{multline*}
\theta _2(P)=\exists X_1, X_3 \exists X_2, X_4\forall Y, Y' \forall T, T',T''\forall Z,Z' \left(
S(X_1,X_2,P)=1\wedge S(X_3,X_4,P)=1\right.\\  \wedge \neg \left(\left.\tau(T,X_2,Y,Z)\vee \tau(T',X_4,Y',Z')\vee (\bigwedge_{i
=1}^{m}[{t_i}'',c_i(X_2)]=1\wedge X_2^{\sigma_{T''}}=X_4) \right)\right).
\end{multline*}

Both these formulas have type (\ref{39}). Similarly one can write a formula $\theta _N(P)$ that states for
parameters $P$ there exists at least N non-equivalent Max-classes of algebraic solutions of $S(X,P)=1$ with respect to $H$.

Then $\theta _N(P)\wedge \neg\theta _{N+1}(P)$ states that there are exactly $N$ non-equivalent Max-classes. The theorem is proved.

We will recall now how formulas (\ref{39}),(\ref{40}) appear in \cite{KMel}. In Section 12 in \cite{KMel}, parameters $P$ always correspond to variables $X_1,Y_1,\ldots ,X_{k-1}$. The tree $T_{X_k}$ is finite, we can have schemes of levels $(1,0),(1,1,),(2,1),(2,2)$ etc up to some number $(m,m)$.
We will concentrate on level $(2,1).$ In Definition 27 we define initial fundamental sequences of levels $(2,1)$ and $(2,2)$ and width $i$ (the possible width is bounded) modulo $P=<X_1,Y_1,\ldots ,X_{k-1}>$. All conditions (1)-(5) in this definition can be expressed by boolean combinations of conjunctive $\exists\forall$ formulas (depending on $X_1,Y_1,\ldots , X_{k-1}$). Lemmas 27 and 28  reduce the analysis of the set of parameters $P=<X_1,Y_1,\ldots ,X_{k-1}>$ for which there exists a fundamental sequence of level $(2,1)$ and width $i$ to those for which for this fundamental sequence there exists a generic family of
values of the variables  $Y_{k-1}^{(1)},
Z_{k-1}^{(2,j,s)},Y_{k-1}^{(2,j,s)}, \newline j=1,\ldots ,i_s, \
s=1,\ldots ,t,\ Z_{k-1}^{(3,k)}$ with properties (1)--(5) from
Definition 27. Similarly, Lemma 29 reduces the analysis of the set of parameters $P=<X_1,Y_1,\ldots ,X_{k-1}>$ for which there exists a fundamental sequence of level $(2,2)$ and width $i$ to those for which for this fundamental sequence there exists a generic family of
values of the variables  $Y_{k-1}^{(1)},
Z_{k-1}^{(2,j,s)},Y_{k-1}^{(2,j,s)}, \newline j=1,\ldots ,i_s, \
s=1,\ldots ,t,\ Z_{k-1}^{(3,k)}$ with properties (1)--(6) from
Definition 28.

For a given value of $X_1,Y_1,\ldots ,X_{k-1}$ formula
\begin{equation} \label{333}
 \Psi=\forall
Y_{k-1}\exists X_{k}\forall Y_k (U(X_1,Y_1,\ldots
,X_k,Y_k)=1\rightarrow V(X_1,Y_1,\ldots ,X_k,Y_k)=1)
\end{equation}
cannot be proved on level less than (2,1)
 if and only if  the following
conditions are satisfied.
\begin{enumerate}

\item [(a)] There exist algebraic solutions  for some
$U_{i,coeff}=1$ corresponding to the terminal group of a fundamental
sequence $V_{i,\rm fund}$ satisfying possibility (i) in Lemma
27, namely for this fundamental sequence there exists a generic family of
values of the variables  $Y_{k-1}^{(1)},
Z_{k-1}^{(2,j,s)},Y_{k-1}^{(2,j,s)}, \newline j=1,\ldots ,i_s, \
s=1,\ldots ,t,\ Z_{k-1}^{(3,k)}$ with properties (1)--(5) from
Definition 27.

\item [(b)] There is no algebraic solutions for equations
corresponding to the terminal groups of fundamental sequences that
describe solutions from $V_{i,\rm fund}$ that do not satisfy one of
properties (1)-(5). There is a finite number of such fundamental
sequences.

\item  [(c)] There is no algebraic solutions for equations
corresponding to the terminal groups of fundamental sequences of
level (2,1) and greater depth derived from $V_{i,\rm fund}$.

\item  [(d)] $(X_1,Y_1,\ldots ,Y_{k-1},Y_{k-1}^{(1)})$ cannot be
extended to a solution of  $V=1$ by arbitrary $X_k$ ($X_k$ of level
0) and $Y_k$ of level (1,0).
\end{enumerate}

For a given value of $X_1,Y_1,\ldots ,X_{k-1}$ formula (\ref{333})
can be disproved on level $(2,1)$ if and only if it cannot be proved
on level less than $(2,1)$ and there is no algebraic solutions for
equations corresponding to the terminal groups of fundamental
sequences of level (2,2) corresponding to $V_{i,\rm fund}$.
These conditions can be described by a boolean combination of conjunctive $\exists\forall$-formulas of type (\ref{39}).

\section*{Acknowledgments}

The authors would like to thank the referee for many comments and suggestions. The first named author gratefully acknowledges  support from the NSF grant DMS-1201379 and PSC-CUNY enhanced award. The second named author gratefully acknowledges  support from the  NSF grant DMS-1201379.


\begin{thebibliography}{99}

\bibitem{[BL]} J.L. Brenner, R.C. Lyndon, {\em Permutations
 and cubic graphs}, Pacific Journal of Maths, v.104, 285--315
\bibitem{BF} M. Feighn (joint with M. Bestvina), {\em Talk on negligible sets: Luminy, France}, June 2010, M. Feighn's homepage at Rutgers,
http://andromeda.rutgers.edu/~feighn/.
\bibitem{chai} V. Chaynikov, On the generators of the kernels of hyperbolic group presentations, Algebra and Discrete
Mathematics, Volume 11, 2, 2011, 18–50.
\bibitem{G} M. Gromov, {\em Hyperbolic groups,} Essays in group theory, Springer Verlag, MSRI Publ., 8, 1987, 75-263.
\bibitem{Imp}
O. Kharlampovich and A. Myasnikov, {\it Implicit function theorem
over free groups}, Journal of Algebra, vol 290/1, pp. 1--203, 2005.

\bibitem{KMel}
O. Kharlampovich and A. Myasnikov, {\em Elementary theory of free
non-abelian groups},
\newblock  Journal of Algebra ,  302, Issue 2, 451-552,
2006.


\bibitem{Kour} Kourovka Notebook: Unsolved Problems in Group Theory (American Mathematical Society Translations Series 2) by Kourovskaia Tetrad. English, L. Ia Leifman and D. J. Johnson (Aug 1983)
\bibitem{RS95} E. Rips, Z. Sela, Canonical representatives and equations in hyperbolic groups, Inventiones Mathematicae, v. 120, no 3, 1995, 489-512.
\bibitem{Sela} Z. Sela. \textit{ Diophantine geometry over groups V:
Quantifier elimination.}, GAFA 16(2006), 707-730.
\bibitem {Arzh} A.Yu. Ol'shanskii, {\em On residualing homomorphisms and $G$-subgroups of hyperbolic groups}, IJAC, Vol. 3, No. 4, 365-409, 1993.
\bibitem{Sela51} Z. Sela,  Diophantine geometry over groups. V1. Quantifier elimination. I. Israel J. Math. 150 (2005), 1197.

\bibitem{Sela7}  Z. Sela, Diophantine geometry over groups. VII. The elementary theory of a hyperbolic group. Proc. Lond. Math. Soc. (3) 99 (2009), no. 1, 217273.

\end{thebibliography}
\end{document}